\documentclass{amsart}
\usepackage{amsmath,amssymb,latexsym, amsfonts, amscd, amsthm}

\usepackage{hyperref}
\hypersetup{colorlinks=false}
\usepackage{graphicx}

\oddsidemargin=0.3 in 
\evensidemargin=0.3 in
\textwidth=6.5 in   

\parskip 3pt

\hoffset -0.3 in  
\headheight=12pt
\headsep=25pt 
\topmargin=0 in 
\textheight=8.5 in

\usepackage{bbm}
\usepackage{enumerate}
\usepackage[all]{xy}

\newtheorem{thm}{Theorem}[section]
\newtheorem{pro}[thm]{Proposition}
\newtheorem{lm}[thm]{Lemma}

\numberwithin{equation}{section}
\newtheorem{wh}[thm]{Working Hypothesis}

\theoremstyle{definition}

\newtheorem{exa}[thm]{Example}

\theoremstyle{remark}
\newtheorem{rem}[thm]{Remark}

\numberwithin{equation}{section}

\DeclareMathOperator*{\Irr}{Irr}
\DeclareMathOperator*{\disc}{disc}
\DeclareMathOperator*{\Gal}{Gal}
\DeclareMathOperator*{\Res}{Res}

\DeclareMathOperator*{\Ind}{Ind}

\DeclareMathOperator*{\id}{id}

\DeclareMathOperator*{\temp}{temp}
\DeclareMathOperator*{\ad}{ad}
\DeclareMathOperator*{\scn}{sc}

\DeclareMathOperator*{\der}{der}

\DeclareMathOperator*{\Hom}{Hom}
\DeclareMathOperator*{\Aut}{Aut}

\DeclareMathOperator*{\imag}{Im}

\newcommand{\mcA}{\mathcal{A}}
\newcommand{\vp}{\varphi}
\newcommand{\tvp}{\widetilde{\varphi}}
\newcommand{\s}{\simeq}

\newcommand{\mm}{\mathfrak{m}}
\newcommand{\ee}{\mathfrak{e}}

\newcommand{\si}{\sigma}
\newcommand{\ts}{\widetilde{\sigma}}

\newcommand{\CC}{\mathbb{C}}
\newcommand{\NN}{\mathbb{N}}

\newcommand{\QQ}{\mathbb{Q}}
\newcommand{\ZZ}{\mathbb{Z}}

\def\bA{\bold A}
\def\bG{\bold G}

\def\bP{\bold P}

\def\bB{\bold B}
\def\bT{\bold T}

\def\bA{\bold A}
\def\bG{\bold G}
\def\bP{\bold P}

\def\bZ{\bold Z}

\def\bU{\bold U}

\def\cS{\mathcal S}
\def\L{\mathcal L}

\DeclareMathOperator*{\Sp}{Sp}

\DeclareMathOperator*{\SL}{SL}
\DeclareMathOperator*{\GL}{GL}

\DeclareMathOperator*{\GSp}{GSp}

\DeclareMathOperator*{\PGL}{PGL}
\DeclareMathOperator*{\GSpin}{GSpin}

\newcommand{\tG}{\widetilde{G}}
\newcommand{\tbG}{\widetilde{\bold G}}


\DeclareMathOperator*{\sgn}{\textsf{sgn}}

\def\cS{\mathcal S}
\def\L{\mathcal L}

\newcommand{\tdel}{\widetilde{\delta}}
\newcommand{\tsigma}{\widetilde{\sigma}}

\newcommand{\trho}{\widetilde{\rho}}

\newcommand{\Z}{\mathbf{Z}}

\newcommand{\G}{\mathbf{G}}

\newcommand{\tH}{\widetilde{H}}

\begin{document}

\title[On multiplicity in restriction of tempered representations]{On multiplicity in restriction of tempered representations of $p$-adic groups}

\author[Kwangho Choiy]{Kwangho Choiy} 
\address{Department of Mathematics,
Southern Illinois University,
Carbondale, IL 62901-4408,
U.S.A.}
\email{kchoiy@siu.edu}

\keywords{multiplicity in the restriction, local Langlands conjecture, internal structure of $L$-packet, tempered representation}

\subjclass[2010]{Primary \textbf{11F70}; Secondary 22E50, 22E35}

\date{\today 
}

\begin{abstract}
We establish an equality between two multiplicities: one in the restriction of tempered representations of a $p$-adic group to its closed subgroup with the same derived group; and one occurring in their corresponding component groups in Langlands dual sides, so-called $\mathcal{S}$-groups, under working hypotheses about the tempered local Langlands conjecture and the internal structure of tempered $L$-packets.
This provides a formula of the multiplicity for $p$-adic groups by means of dimensions of irreducible representations of their $\mathcal{S}$-groups.
\end{abstract}

\maketitle

\setcounter{tocdepth}{1}
\tableofcontents
\section{Introduction} \label{intro}
From the construction of $L$-packets of the non-split inner form of $\SL_2$ over a $p$-adic field $F$ of characteristic 0 by Labesse and Langlands \cite{ll79, shel79}, it is observed that the multiplicity in the restriction of irreducible smooth representations of the non-split inner form of $\GL_2$ to the non-split inner form of $\SL_2$ fails to be one. 
Due to the uniqueness of Whittaker models \cite{rod}, on the other hand, the multiplicity is always one for the split form $\SL_n$ with any positive integer $n$ \cite{gk82, tad92}.
Extending the result of Labesse and Langlands to any inner form of $\SL_n$ over $F,$
Hiraga and Saito discovered in \cite{hs11} that
the multiplicity in restriction from inner forms of $\GL_n$ to those of $\SL_n$ equals the dimension of an irreducible representation of a component group (so-called, $\cS$-group) in $\SL_n(\CC)$ via the local Langlands correspondence for $F$-inner forms of $\SL_n.$ Adler and Prasad established some multiplicity-one theorems in the restriction from $GU(V)$ to $U(V)$ for the cases with no Whittaker model by means of Fourier-Jacobi models, where $V$ is a finite-dimensional vector space over a local field of characteristic not $2$ with a non-degenerate symmetric or skew-symmetric form \cite{ap06}.
B. Xu also studied the multiplicity for quasi-split groups under some assumptions \cite{xu15}.
Furthermore, the author's work on the local Langlands correspondence for the $F$-inner form $\Sp_{1,1}$ of $\Sp_4$ offers an interesting example \cite[Section 7.7]{ch15}, where the multiplicity in restriction of an irreducible smooth representation from $\GSp_{1,1}$ to $\Sp_{1,1}$ no longer coincides with the dimension of the corresponding irreducible representation of the $\cS$-group in $\Sp_4(\CC).$ The author and his collaborators also addressed the multiplicity for irreducible smooth representations of small rank general spin groups $\GSpin_4,$ $\GSpin_6$ and their inner forms in \cite{acgspin}, and that for unitary principal series representations of $\GSpin_n$ in \cite{bcgmurthy, bcg_tentative}.

Inspired by such phenomena, we study the multiplicity in the restriction of irreducible smooth representations of $\widetilde{\bold G}(F)$ to $\bold G(F)$ in a general setting, where $\widetilde{\bold G}$ is a connected reductive algebraic $F$-group and $\bold G$ is its closed $F$-subgroup with the same derived group.
The purpose of this paper is to establish an equality of the multiplicity in the restriction of tempered representations from $\tbG(F)$ to $\bG(F)$ and the multiplicity in the restriction of corresponding representations from the associated $\cS$-group of $\bG$ to that of $\tbG,$ under the hypotheses, listed in Section \ref{main}, about the tempered local Langlands conjectures for $\tbG$ and $\bG$ and the conjectural structures of their $L$-packets.
This equality yields a formula of the multiplicity in the restriction that generalizes the previous results mentioned above.

To be precise, we let $\tbG$ be a connected reductive group over $F,$ 
and let $\bG$ be a closed $F$-subgroup of $\tbG$ such that
\begin{equation}  \label{cond on G introduction}
\bG_{\der} = \tbG_{\der} \subseteq \bG \subseteq \tbG,
\end{equation}  
where the subscript ${\der}$ stands for the derived group.
We write $G=\bG(F)$ and $\tG=\tbG(F)$ for the groups of $F$-points. 
Given irreducible smooth representations $\sigma \in \Irr(G)$ and $\pi \in \Irr(\tG),$ 
\textit{the multiplicity} $\langle \sigma, \pi \rangle_{G}$ of $\sigma$ in the restriction ${\Res}_{G}^{\tG} (\pi)$ of $\pi$ to $G$ is defined as follows:
\[
\langle \sigma, \pi \rangle_{G}:= \dim_{\CC}
\; {\Hom}_{G}(\sigma, {\Res}_{G}^{\tG} (\pi)).
\]
For any finite groups $H \leq \tH,$ given $\delta \in \Irr(H)$ and $\rho \in \Irr(\tH),$ we shall define \textit{the multiplicity} $\langle \delta, \rho \rangle_{H}$ of $\delta$ in the restriction ${\Res}_{H}^{\tH} (\rho)$ of $\rho$ to $H$ as follows:
\[
\langle \delta, \rho \rangle_{H}:= \dim_{\CC}
\; {\Hom}_{H}(\delta, {\Res}_{H}^{\tH} (\rho)).
\]

Let $\vp$ be given in the set $\Phi_{\temp}(G)$ of $\widehat{\bG}$-conjugacy classes of tempered $L$-parameters. 
Here $\widehat{\bG} = {^L}\bG^0$ denotes  the complex dual of $\bG$ \cite{bo79}. 
Choose a lifting $\tvp \in \Phi_{\temp}(\tG)$ of $\vp$ using Labesse's theorem  (See Theorem \ref{thm by Labesse}).
We here assume that the local Langlands conjecture and the conjectural structure of $L$-packets are valid for tempered representations of $\tG$ and $G$ in \eqref{cond on G introduction}. 
We refer the reader to Working Hypotheses \ref{wh temp llc}, \ref{shahidi's conj}, and \ref{last hyp} for precise statements regarding these conjectures.
Extending a lemma of Chao and Li \cite{chaoli}, we have
\begin{equation}  \label{crucial exact seq intro}
1 \longrightarrow \cS_{\tvp, \scn}(\widehat{\tbG})  \longrightarrow \cS_{\vp, \scn}(\widehat{\bG})  \longrightarrow X(\tvp) \longrightarrow 1,
\end{equation}
where $X(\tvp)$ is the set of certain cohomological classes stabilizing $ \tvp.$ 
Given a tempered representation $\sigma \in \Pi_{\vp}(G),$ we fix a tempered representation $\ts \in \Pi_{\tvp}(\tG)$  such that $\sigma \subset \Res_G^{\tG} \ts.$ 
The internal structures of $L$-packets for $G$ and $\tG$ provide two finite representations $\rho \in \Irr(\cS_{\vp, \scn}(\widehat \bG), \zeta_{\bG} )$ and $\trho \in  \Irr(\cS_{\tvp, \scn}(\widehat{\tbG}), \zeta_{\tbG} )$ corresponding to $\si$ and $\ts,$ respectively. 
Here 
$\Irr(\cS_{\vp, \scn}, \zeta_{G})$ denotes the set of irreducible representations of $\cS_{\vp, \scn}$ with a certain requirement with respect to a character $\zeta_{G}.$ The relevant definitions will be given in Section \ref{main}.
Note that the condition 
\eqref{cond on G introduction} yields $\zeta_{\bG} =\zeta_{\tbG} .$
For simplicity of notation, we set $\cS_{\vp, \scn}(\widehat \bG)=\cS_{\vp, \scn}$ and $\cS_{\tvp, \scn}(\widehat \tbG)=\cS_{\tvp, \scn}.$
Based on \eqref{crucial exact seq intro}, we construct a homomorphism 
\[  
\Lambda: \tG/G \twoheadrightarrow (\cS_{\vp, \scn}/\cS_{\tvp, \scn})^\vee,
\]
where $(\cS_{\vp, \scn}/\cS_{\tvp, \scn})^\vee$ denotes the group of characters on the finite quotient group $\cS_{\vp, \scn}/\cS_{\tvp, \scn}.$ 
This homomorphism yields an isomorphism
\begin{equation} \label{1st bijection intro}
\lambda: \tG/\tG_{\si} \overset{\s}{\longrightarrow} (\cS_{\vp, \scn}/\cS_{\tvp, \scn})^\vee \big/ I(\rho),
\end{equation} 
where $\tG_{\si} =: \{g \in \tG : {^g}\si \s \si  \},$ and
 $I(\rho) := \{\chi \in (\cS_{\vp, \scn}/\cS_{\tvp, \scn})^\vee : \rho \chi \s \rho \}$ (Theorem \ref{thm 1}). 
For this bijection, we impose one further condition (Working Hypothesis \ref{llast hyp})
that the conjectural internal structure of $L$-packet of $G$ allows ${^g}\sigma$ to correspond to $\rho \chi_g,$ along with Working Hypotheses \ref{wh temp llc}, \ref{shahidi's conj}, and \ref{last hyp}. Here, the character $\chi_g \in (\cS_{\vp, \scn}/\cS_{\tvp, \scn})^\vee$ is given by $\Lambda.$
Furthermore, as an analogue of $\lambda,$ we also have the following isomorphism
\begin{equation} \label{2nd bijection intro}
\widehat \lambda: \cS_{\vp, \scn}/(\cS_{\vp, \scn})_{\trho}  \overset{\s}{\longrightarrow} X(\tvp)/\bar I(\ts), 
\end{equation}
where $(\cS_{\vp, \scn})_{\trho} = \{s \in \cS_{\vp, \scn} : {^s}\trho \s \trho \},$ and 
$\bar I(\ts)$ denotes the image of $I(\ts):= \{ \chi \in (\tG/G)^\vee : \ts \s \ts \chi \}$ in $X(\tvp)$ via the maps $I(\ts) \hookrightarrow \{ a \in H^1(W_F, \widehat {(\tbG/\bG)}) : a \tvp \s \tvp \mbox{ in } \widehat{\tbG} \} \twoheadrightarrow X(\tvp)$
(Theorem \ref{thm 2}). Likewise, for this bijection, we need to assume Working Hypothesis \ref{really llast hyp}
that  the conjectural internal structure of $L$-packet of $\tG$ allows
$\tsigma \chi_s$ to correspond to ${^s}\trho,$ along with Working Hypotheses \ref{wh temp llc}, \ref{shahidi's conj}, and \ref{last hyp}. 
Here, the character  $\chi_s \in (\tG/G)^\vee$  is given by the composition of
 the isomorphism $X(\tvp) \s  \cS_{\vp, \scn}/\cS_{\tvp, \scn}$ (see \eqref{imp exact}) and the map $X(\tvp) \rightarrow  (\tG/G)^\vee$ (see \eqref{for pre hom}).

Recalling that the $L$-packet $\Pi_{\vp}(G) $ is parameterized by $\Irr(\cS_{\vp, \scn}, \zeta_{G}),$ we note that its subset $\Pi_{\ts}(G)$ consisting of all irreducible inequivalent constituents in the restriction ${\Res}_{G}^{\tG}(\ts)$ of $\ts$ from $\tG$ to $G$ is controlled by the quotient $(\cS_{\vp, \scn}/\cS_{\tvp, \scn})^\vee.$
We further remark that the quotient $\cS_{\vp, \scn}/(\cS_{\vp, \scn})_{\trho}$  in \eqref{2nd bijection intro} (hence, the set $\Pi_{\rho}(\cS_{\tvp, \scn})$) is in bijection with 
the $X(\tvp)$-orbit, $\{\ts \chi_a : a \in X(\tvp) \},$ of $\ts$ in  $\Pi_{\tvp}(\tG)$ (see Remark \ref{rem for X-orbit}).
It should be mentioned that $\lambda$ and $\widehat \lambda$ both rely on the conjectural bijection \eqref{internal st} characterizing the internal structures of $L$-packets $\Pi_{\tvp}(\tG)$ and $\Pi_{\vp}(\G)$ (see Remarks \ref{rem for int structure} and \ref{remark for lambda and lambda hat}).

Based on two bijections \eqref{1st bijection intro} and \eqref{2nd bijection intro}, we finally prove the following equality
\begin{equation} \label{multi equal intro}
\langle \sigma, \ts \rangle_{G} = \langle \trho, \rho \rangle_{\cS_{\tvp, \scn}}
\end{equation}
(Theorem \ref{mult thm}).  
Using Clifford theory and some results in the restriction of representations of finite groups, the equality \eqref{multi equal intro} yields 
\begin{equation} \label{multi formula intro}
\langle \sigma, \ts \rangle_{G} = \frac{\dim \rho}{\dim \widetilde\delta} |\Pi_{\rho}(\cS_{\tvp, \scn})|^{-1}
\end{equation}
for any $\widetilde\delta \in \Irr(\cS_{\tvp, \scn})$ satisfying $\widetilde\delta \subset {\Res}_{\cS_{\tvp, \scn}}^{\cS_{\vp, \scn}}(\rho)$ (Theorem \ref{multi generalized}). 
Hence, the multiplicity in the restriction in $p$-adic groups is formulated by means of dimensions of irreducible representations of their $\cS$-groups. 
These $\cS$-groups are all finite but not necessarily abelian.
We remark that, since $\trho \subset {\Res}_{\cS_{\tvp, \scn}}^{\cS_{\vp, \scn}}(\rho),$ the formula \eqref{multi formula intro} is the generalization of Hiraga and Saito's related work in \cite{hs11} for the case of $\tG=\GL_m(D)$ and $G=\SL_m(D)$ (see \eqref{dim=dim=multi}), where $\dim \trho = |\Pi_{\rho}(\cS_{\tvp, \scn})|=1$ and $\langle \sigma, \ts \rangle_{G} =\dim \rho$ (see Section \ref{section for SL}). Furthermore, the formula \eqref{multi formula intro} extends relevant results in \cite{keys87, xu15} from some quasi-split settings to the non-quasi-split (see Remark \ref{rem related to xu}).

In Section \ref{backgrounds}, we recall basic notions and terminologies. Also, the local Langlands conjecture for a $p$-adic group and the conjectural structure of $L$-packets are reviewed and some useful arguments are discussed.
In Section \ref{restriction on p-adic and finite}, we review some results of Gelbart-Knapp \cite{gk82}, Tadi\'c  \cite{tad92}, and Hiraga-Saito \cite{hs11}, regarding the restriction of representations of $p$-adic groups, and discuss some arguments related to the restriction of representations of finite groups, based on Clifford theory. Under working hypotheses about the local Langlands conjecture and the internal structure of $L$-packets, Section \ref{section of main results} provides two bijections \eqref{1st bijection intro} \& \eqref{2nd bijection intro}.
Furthermore, the equality \eqref{multi equal intro} of multiplicities in the two sides is established and the multiplicity formula \eqref{multi formula intro} is then formulated.
We also observe that Hiraga and Saito's work in \cite{hs11} about the multiplicity in the case of $\GL_m(D)$ and $\SL_m(D)$ with a central division algebra $D$ is generalized to arbitrary connected reductive groups $\tbG$ and $\bG$ in \eqref{cond on G introduction}.
In Appendix \ref{examples}, we provide some examples related to the results established in Section \ref{section of main results}.
\section{Basic notation and backgrounds} \label{backgrounds}
\subsection{Notation and conventions} \label{notation}
We take the following notation throughout the paper, unless otherwise specified. 
Let $p$ be a prime number. 
The field $F$ denotes a finite extension of $\QQ_p$ with an algebraic closure $\bar{F}.$ 
We denote by $W_F$ the Weil group of $F$ and by $\Gamma$ the absolute Galois group $\Gal(\bar{F} / F).$ 

Let $\bG$ be a connected reductive algebraic group over $F.$ 
We denote by $\bA_{\bG}$ the split component, that is, the maximal $F$-split torus in the center of $\bG.$ 
We denote by $G=\bG(F)$ the group of $F$-points. Fixing $\Gamma$-invariant splitting data, 
we define the $L$-group of $\bG$ as a semi-direct product $^{L}\bG := \widehat{\bG} \rtimes \Gamma$ (see \cite[Section 2]{bo79}). 
For $i \in \NN$ and $\Gamma$-module $A,$ we denote by $H^i(F, A) := H^i(\Gamma, A)$
the Galois cohomology of $A.$ 
Since any $\Gamma$-module $A$ can be given a $W_F$-module structure through the continuous homomorphism $W_F \rightarrow \Gamma$ with dense image, we have a restriction map $H^i(F,A) \rightarrow H^i(W_F,A)$ (see \cite{la85, karpuk13}).

Let $\Irr(G)$ denote the set of isomorphism classes of irreducible smooth complex representations of $G.$
By abuse of notation, we identify an isomorphism class with its representative. 
We often write $V_{\pi}$ for the space of $\pi \in \Irr(G).$  
We denote by $\Irr_{\disc}(G)$ the subset of $\Irr(G)$ consisting of discrete series representations, i.e., their central characters are unitary and the absolute values of their matrix coefficients are square-integrable modulo the center of $G,$ and by $\Irr_{\temp}(G)$ the subset of tempered representations of $G.$

For any topological group $J,$  
Write $Z(J)$ for its center, and $J ^\vee$ for the group $\Hom(J, \CC^{\times})$ of all continuous characters. 
We denote by $\mathbbm{1}$ the trivial character. 
The cardinality of a finite set $X$ is denoted by $|X|.$ For two integers $x$ and $y,$ $x \big{|} y$ means that $y$ is divisible by $x.$
\subsection{Definitions of multiplicities in restriction} \label{def section of multi}
Let $\tbG$ be a connected reductive group over $F,$ 
and let $\bG$ be a closed $F$-subgroup of $\tbG$ such that
\begin{equation}  \label{cond on G}
\bG_{\der} = \tbG_{\der} \subseteq \bG \subseteq \tbG,
\end{equation}  
where the subscript ${\der}$ stands for the derived group.
We write $G=\bG(F)$ and $\tG=\tbG(F)$ for the groups of $F$-points. 
Given irreducible smooth representations $\sigma \in \Irr(G)$ and $\pi \in \Irr(\tG),$ 
\textit{the multiplicity} $\langle \sigma, \pi \rangle_{G}$ of $\sigma$ in the restriction ${\Res}_{G}^{\tG} (\pi)$ of $\pi$ to $G$ is defined as follows:
\[
\langle \sigma, \pi \rangle_{G}:= \dim_{\CC}
\; {\Hom}_{G}(\sigma, {\Res}_{G}^{\tG} (\pi)).
\]

Let $\tH$ be a finite group and let $H$ be its subgroup. 
We denote by $\Irr(\sharp)$ the set of isomorphism classes of irreducible complex representations of a finite group $\sharp$ and identify an isomorphism class with its representative, by abuse of notation.
Given $\delta \in \Irr(H)$ and $\rho \in \Irr(\tH),$ we shall define \textit{the multiplicity} $\langle \delta, \rho \rangle_{H}$ of $\delta$ in the restriction ${\Res}_{H}^{\tH} (\rho)$ of $\rho$ to $H$ as follows:
\[
\langle \delta, \rho \rangle_{H}:= \dim_{\CC}
\; {\Hom}_{H}(\delta, {\Res}_{H}^{\tH} (\rho))
\]
(cf., Chapter 20 of \cite{jamesliebeck01}).
\subsection{$L$-parameters} \label{l-parameters}
Given a connected reductive algebraic group $\bG$ over $F,$ we let $\Phi(G)$ denote the set of $\widehat{\bG}$-conjugacy classes of $L$-parameters, i.e., admissible homomorphisms 
\[
\vp: W_F \times {\SL}_2(\CC) \longrightarrow {^L}\bG,
\]
(see \cite[Section 8.2]{bo79}). 
Here, we note from the definition of admissible homomorphisms that a condition of relevance is required for non quasi-split group $\bG,$ though any $L$-parameter for quasi-split groups is relevant to $\bG.$
Following \cite[Sections 3 and 8]{bo79}, a parabolic subgroup of ${^L}\bG$ is relevant to $\bG$ if it is equal to ${^L}\bP$ for some parabolic $F$-subgroup $\bP$ of $\bG.$ 
We say that $\vp \in \Phi(G)$ is relevant to $G$ if any parabolic subgroup of ${^L}\bG$ containing the image of $\vp$ is relevant to $\bG.$ 

Given $\vp \in \Phi(G),$ we denote by $S_{\vp}$ the centralizer in $\widehat{\bG}$ of the image of $\vp$ and $Z(\widehat{\bG})^{\Gamma}$ the $\Gamma$-invariant group of $Z(\widehat{\bG}).$
We say $\vp \in \Phi(G)$ is tempered if $\vp(W_F)$ is bounded, and $\vp$ is discrete if it is tempered and the quotient group $S_{\vp} / Z(\widehat{\bG})^{\Gamma}$ is finite.
Let $\Phi_{\disc}(G)$ and $\Phi_{\temp}(G)$ denote the subsets of $\Phi(G)$ consisting of discrete and tempered $L$-parameters of $G,$ respectively. 
We further say $\vp$ is elliptic if $S_{\vp} / Z(\widehat{\bG})^{\Gamma}$ is finite, equivalently if the image of $\vp$ in $\widehat G$ is contained in no proper parabolic subgroup.

\subsection{Labesse's theorem}
We discuss the following argument related to local Langlands conjecture in a certain special setting, which will be used in Sections \ref{section of main results} and \ref{section for SL}.
Let $\bG$ and $\tbG$ be connected reductive algebraic groups over $F$ satisfying the following exact sequence of connected components of $L$-groups
\begin{equation} \label{Labesse situation}
1 \longrightarrow \widehat{S} \longrightarrow \widehat{\tbG} \overset{pr}{\longrightarrow}\widehat{\bG} \longrightarrow 1,
\end{equation}
where $\widehat{S}$ is a central torus in $\widehat{\tbG},$ 
and $pr$ is a surjective homomorphism which is compatible with $\Gamma$-actions on $\widehat{\tbG}$ and $\widehat{\bG}.$
\begin{thm} (\cite[Th\'{e}or\`{e}m 8.1]{la85}) \label{thm by Labesse}
With the above notation, for any $\vp \in \Phi(G),$ there exists $\tvp \in \Phi(\tG)$ such that
\[
\vp = \tvp \circ pr.
\]
\end{thm} 
Such a parameter $\tvp$ is determined up to a $1$-cocycle of $W_F$ in $\widehat{S}$ (see \cite[Section 7]{la85} and \cite[Theorem 3.5.1]{chaoli}). 
Theorem \ref{thm by Labesse} tells us the existence of a lifting of a given $L$-parameter in the setting \eqref{Labesse situation}. 
\section{On restriction of representations} \label{restriction on p-adic and finite}
We recall some known results by Gelbart-Knapp in \cite{gk82}, Tadi\'c in \cite{tad92}, and Hiraga-Saito in \cite{hs11}
about the restriction of representations of $p$-adic groups.
Further, similar arguments are also reviewed for representations of finite groups, based on Clifford theory (cf., see \cite{jamesliebeck01}). 
\subsection{Restriction for $p$-adic groups}  \label{restriction on p-adic}
We continue with connected reductive algebraic groups $\bG$ and $\tbG$ over $F$ satisfying the condition \eqref{cond on G}.
Given $\sigma \in \Irr(G),$ there exists $\ts \in \Irr(\tG)$ such that 
\[
\sigma \subset {\Res}_{G}^{\tG}(\ts),
\]
due to \cite[Lemma 2.3]{gk82} and \cite[Proposition 2.2]{tad92}. 
We use both $\Pi_{\sigma}(G)$ and $\Pi_{\ts}(G)$ for the set of equivalence classes of all irreducible constituents of ${\Res}_{G}^{\tG}(\ts).$ 
It turns out that $\Pi_{\sigma}(G)$ is finite and independent of the choice of the lifting $\ts \in \Irr(\tG)$ (see \cite[Lemma 2.1]{gk82} and \cite[Proposition 2.4 \& Corollary 2.5]{tad92}). 
Further, for any irreducible constituents $\si_1$ and  $\si_2$ in ${\Res}_{G}^{\tG}(\ts),$ it is clear that $\Pi_{\si_1}(G) = \Pi_{\si_2}(G).$ 

Given $\ts_1$, $\ts_2 \in \Irr(\tG),$ we recall the following equivalent statements from \cite[Lemma 2.4]{gk82} and \cite[Corollary 2.5]{tad92}:
\begin{equation} \label{pro for lifting}
\exists \chi \in (\tG / G)^\vee ~~\text{such that}~~ \ts_1 \s \ts_2 \chi ~~ \Leftrightarrow ~~  \Pi_{\ts_1}(G) \cap \Pi_{\ts_2}(G) \neq \emptyset ~~ \Leftrightarrow ~~ \Pi_{\ts_1}(G) = \Pi_{\ts_2}(G),
\end{equation}
where $(\tG / G)^\vee = \Hom(\tG / G, \CC^{\times})$ of all continuous characters of $\tG / G,$ which are considered as continuous 1-dimensional representation of $\tG$ that are trivial on its subgroup $G.$
Since ${\Res}_{G}^{\tG}(\ts)$ is completely reducible due to \cite[Lemma 2.1]{gk82} and \cite[Lemma 2.1]{tad92}, we have the decomposition
\begin{equation} \label{decomp of Res}
{\Res}_{G}^{\tG}(\ts) = \bigoplus _{\tau \in \Pi_{\sigma}(G)} \langle \tau, \ts \rangle_{G} \cdot \tau
\end{equation}
(cf. \cite[Chapter 2]{hs11}).
We note that the multiplicity $\langle \tau, \ts \rangle_{G}$ has the common value over $\tau \in \Pi_{\sigma}(G)$ \cite[Lemma 2.1(b)]{gk82}, that is, 
$
\langle \tau_1, \ts \rangle_{G} = \langle \tau_2, \ts \rangle_{G}
$
for any $\tau_1,~ \tau_2 \in \Pi_{\sigma}(G)$. We define
\begin{equation} \label{X(sigma)}
I(\ts):= \{ \chi \in (\tG/G)^\vee : \ts \s \ts \chi \}.
\end{equation}
Then we have the following equalities.
\begin{pro} \label{pro for multi}
With the above notation, we have 
\[
|I(\ts)| 
= 
{\dim}_{\CC} \; {\Hom}_{G}({\Res}_{G}^{\tG} (\ts), {\Res}_{G}^{\tG} (\ts))
= 
|\Pi_{\sigma}(G)| \cdot \langle \sigma, \ts \rangle_{G}^2.
\]
\end{pro}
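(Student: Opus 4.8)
## Proof Proposal

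The plan is to compute $\dim_{\CC} \Hom_G(\Res_G^{\tG}(\ts), \Res_G^{\tG}(\ts))$ in two different ways, once using Frobenius reciprocity / Mackey-type considerations to relate it to $X(\ts)$, and once using the explicit decomposition \eqref{decomp of Res} to relate it to $|\Pi_{\sigma}(G)|$ and the multiplicity. The equality of the two computations yields the claimed identity.

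First I would establish the right-hand equality. By \eqref{decomp of Res} we have
\[
\Res_G^{\tG}(\ts) = \bigoplus_{\tau \in \Pi_{\sigma}(G)} \langle \tau, \ts \rangle_G \cdot \tau,
\]
and since all constituents $\tau$ are irreducible, pairwise non-isomorphic members of $\Irr(G)$, Schur's lemma gives
\[
\Hom_G(\Res_G^{\tG}(\ts), \Res_G^{\tG}(\ts)) = \bigoplus_{\tau \in \Pi_{\sigma}(G)} \mathrm{Mat}_{\langle \tau, \ts \rangle_G}(\CC),
\]
so its dimension is $\sum_{\tau} \langle \tau, \ts \rangle_G^2$. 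Using the fact, recalled just before \eqref{X(sigma)}, that $\langle \tau, \ts \rangle_G$ takes a common value over all $\tau \in \Pi_{\sigma}(G)$ equal to $\langle \sigma, \ts \rangle_G$, this sum collapses to $|\Pi_{\sigma}(G)| \cdot \langle \sigma, \ts \rangle_G^2$. This part is essentially bookkeeping.

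The main work is the left-hand equality $|X(\ts)| = \dim_{\CC} \Hom_G(\Res_G^{\tG}(\ts), \Res_G^{\tG}(\ts))$. I would use Frobenius reciprocity to write
\[
\Hom_G(\Res_G^{\tG}(\ts), \Res_G^{\tG}(\ts)) \cong \Hom_{\tG}(\ts, \Ind_G^{\tG} \Res_G^{\tG}(\ts)),
\]
and then identify $\Ind_G^{\tG}\Res_G^{\tG}(\ts)$ with $\ts \otimes C^{\infty}(\tG/G)$, where $\tG$ acts on functions by translation — here one uses that $G$ is normal in $\tG$ with $\tG/G$ abelian (since $\tG/G$ embeds in $\tG/\tG_{\der}$ via \eqref{cond on G}). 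Because $\tG/G$ is an abelian locally compact group and $\ts$ is admissible, the $\tG$-module $C^\infty(\tG/G)$ decomposes as a direct sum/integral over characters $\chi \in (\tG/G)^D$, but the only characters that can contribute a nonzero $\tG$-equivariant map into $\ts \otimes (\text{stuff})$ — equivalently the only $\chi$ for which $\Hom_{\tG}(\ts, \ts\otimes\chi) \neq 0$ — are precisely those with $\ts \simeq \ts \otimes \chi$, i.e. $\chi \in X(\ts)$; and for each such $\chi$ the Hom space is one-dimensional by Schur's lemma applied to the irreducible $\ts$. Hence the total dimension is exactly $|X(\ts)|$, which is finite because $X(\ts) \subseteq (\tG/G)^D$ is finite (this finiteness is part of the Gelbart–Knapp/Tadić package, equivalently it follows from finiteness of $\Pi_\sigma(G)$).

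The step I expect to require the most care is the identification $\Ind_G^{\tG}\Res_G^{\tG}(\ts) \cong \ts \otimes C^{\infty}(\tG/G)$ together with the control of which characters occur: one must be slightly careful that compact induction versus full induction agree here (since $\tG/G$ may be non-compact, being a quotient of $F^\times$-type), and that the decomposition of the regular representation of the abelian group $\tG/G$ interacts correctly with the admissibility of $\ts$ so that only finitely many $\chi$ contribute and each with multiplicity one. An alternative, perhaps cleaner, route that avoids these analytic subtleties is to invoke directly the structural results of \cite{gk82} and \cite{tad92}: $\End_G(\Res_G^{\tG}(\ts))$ is known to be spanned by the intertwining operators attached to the elements of $X(\ts)$, and these are linearly independent, giving $\dim \End_G(\Res_G^{\tG}(\ts)) = |X(\ts)|$ immediately. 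I would present the Frobenius-reciprocity argument as the main line and cite the Gelbart–Knapp/Tadić intertwining-operator description as the justification that distinct $\chi \in X(\ts)$ yield linearly independent endomorphisms.
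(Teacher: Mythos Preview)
Your proposal is correct and essentially matches the paper's argument. For the second equality you do exactly what the paper does: apply Schur's lemma to the decomposition \eqref{decomp of Res} and use the constancy of the multiplicity. For the first equality the paper simply cites \cite[Proposition 2.4]{tad92}, which is precisely your ``alternative, cleaner route''; your Frobenius-reciprocity line is really an unpacking of that citation rather than a different approach, and your own caveats about $\Ind$ versus $c\text{-}\Ind$ on the possibly non-compact quotient $\tG/G$ explain why the paper (and Tadi\'c) phrase things directly in terms of the intertwining operators $I_\chi$ rather than via induction.
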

\begin{proof}
The first equality is a consequence of \cite[Proposition 2.4]{tad92}. 
The second equality follows from the decomposition \eqref{decomp of Res} and Schur's lemma. Indeed, for any $\tau_1, \tau_2 \subset {\Res}_{G}^{\tG} (\ts),$ we have 
$
{\Hom}_{G}(\tau_1, \tau_2) = 0
$
unless $\tau_1 \s \tau_2,$ in which case ${\Hom}_{G}(\tau_1, \tau_2) \s \CC.$ Thus, the proof is complete.
\end{proof}
We define the stabilizer of $\sigma$ in $\tG$  
\[
\tG_{\sigma}:= \{ g \in \tG : {^g}{\sigma} \s \sigma
\}.
\]
It is known \cite[Corollary 2.3]{tad92} that $\tG_{\sigma}$ is an open normal subgroup of $\tG$ of finite index and satisfies
\begin{equation}  \label{normal subgroups}
Z({\tG}) \cdot G \subseteq \tG_{\sigma} \subseteq \tG.
\end{equation}
Since $\tbG$ and $\bG$ share the same derived group by the condition \eqref{cond on G}, we also note that $Z({\tG}) \cdot G$ is an open normal subgroup of $\tG$ of finite index.
\begin{pro}  (\cite[Lemma 2.1(c)]{gk82}) \label{simply transitive on the set}
The quotient $\tG/\tG_{\sigma}$ acts by conjugation on the set $\Pi_{\sigma}(G)$ simply and transitively. In particular, the set $\Pi_{\sigma}(G)$ is in one-to-one correspondence with the quotient $\tG/\tG_{\sigma}.$ 
\qed
\end{pro}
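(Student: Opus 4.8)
The plan is to run the classical Clifford-theory argument for the normal subgroup $G \subseteq \tG$, and then combine it with the two structural facts already on record: that $\Res_{G}^{\tG}(\ts)$ is completely reducible with only finitely many constituents (Section \ref{structure theory of G-K}), and that $\tG_{\sigma}$ is a \emph{normal} subgroup of $\tG$ satisfying \eqref{normal subgroups}.

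First I would check that $\tG$ acts on $\Irr(G)$ by conjugation and that this action permutes the finite set $\Pi_{\sigma}(G)$. By \eqref{cond on G} the quotient $\tbG/\bG$ is a quotient of the torus $\tbG/\tbG_{\der}$, hence abelian, so $\bG$ is normal in $\tbG$ and therefore $G$ is normal in $\tG$; thus $\tilde g \in \tG$ carries $\sigma \in \Irr(G)$ to ${}^{\tilde g}\sigma \in \Irr(G)$. Since $\ts(\tilde g)$ intertwines the $\tilde g$-conjugate of $\Res_{G}^{\tG}(\ts)$ with $\Res_{G}^{\tG}(\ts)$ itself (because $\ts$ is a representation of the ambient group $\tG$, not merely of $G$), the multiset of irreducible constituents of $\Res_{G}^{\tG}(\ts)$ is invariant under conjugation, so $\tG$ genuinely acts on $\Pi_{\sigma}(G)$.

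The core step is transitivity. Writing $V_{\ts}[\tau]$ for the $\tau$-isotypic $G$-subspace of $V_{\ts}$, complete reducibility gives $V_{\ts} = \bigoplus_{\tau \in \Pi_{\sigma}(G)} V_{\ts}[\tau]$, and a direct check shows $\ts(\tilde g)\, V_{\ts}[\tau] = V_{\ts}[{}^{\tilde g}\tau]$ for each $\tilde g \in \tG$. Hence $\sum_{\tilde g \in \tG} \ts(\tilde g)\, V_{\ts}[\sigma]$ is a nonzero $\tG$-stable subspace of $V_{\ts}$, so it equals $V_{\ts}$ by irreducibility of $\ts$; comparing with the isotypic decomposition forces every $\tau \in \Pi_{\sigma}(G)$ to lie in the single $\tG$-orbit of $\sigma$. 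Next, using that $\tG_{\sigma}$ is normal together with transitivity, one sees that $\tG_{\sigma}$ fixes \emph{every} member of $\Pi_{\sigma}(G)$: if $\tau = {}^{\tilde h}\sigma$ and $\tilde g \in \tG_{\sigma}$, then ${}^{\tilde g}\tau = {}^{\tilde h}\bigl({}^{\tilde h^{-1}\tilde g \tilde h}\sigma\bigr) = {}^{\tilde h}\sigma = \tau$. So the action descends to $\tG/\tG_{\sigma}$, and the orbit map $\tilde g\,\tG_{\sigma} \mapsto {}^{\tilde g}\sigma$ is well defined, injective by the very definition of $\tG_{\sigma}$, and surjective by transitivity — hence a bijection $\tG/\tG_{\sigma} \xrightarrow{\sim} \Pi_{\sigma}(G)$ carrying left translation to the conjugation action, which is exactly simple transitivity together with the asserted one-to-one correspondence.

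The only place demanding genuine care is the isotypic-component bookkeeping in the smooth, possibly infinite-dimensional setting; but this is precisely where complete reducibility of $\Res_{G}^{\tG}(\ts)$ and finiteness of $\Pi_{\sigma}(G)$ — both already established in Section \ref{structure theory of G-K} — let the finite-group Clifford argument go through without change, so I expect no further obstacle.
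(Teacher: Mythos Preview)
Your argument is correct: this is the standard Clifford-theory proof, and each step (normality of $G$, transitivity via irreducibility of $\ts$, descent to $\tG/\tG_{\sigma}$ using the already-recorded normality of $\tG_{\sigma}$, and bijectivity of the orbit map) is sound in the smooth setting thanks to the complete reducibility and finiteness established earlier. The paper does not reproduce this argument at all --- it simply cites \cite[Lemma 2.1(c)]{gk82}, where the same Clifford-type reasoning is carried out --- so your proof is not a different route but rather an explicit unpacking of that citation.
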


We review some results in \cite[Chapter 2]{hs11} (see also Section \ref{section for SL}).
Given $\ts \in \Irr(\tG)$ 
and $\chi \in I(\ts),$ 
there is a non-zero endomorphism $I_{\chi} \in {\Aut}_{\CC}(V_{\ts})$ such that $I_{\chi} \circ (\ts \chi)  = \ts \circ I_{\chi}.$
The scalar endomorphism $\tilde v \mapsto z \cdot \tilde v$ for $v \in V_{\ts}$ and $z \in \CC^{\times}$ is defined and denoted by $z \cdot \id_{V_{\ts}}.$ 
Thus, the subgroup consisting of $z \cdot \id_{V_{\ts}}$ in ${\Aut}_{\CC}(V_{\ts})$ is identified with $\CC^\times.$
Define $\mcA(\ts)$ as the subgroup of ${\Aut}_{\CC}(V_{\ts})$ generated by $\{I_{\chi} : \chi \in I(\ts) \}$ and $\CC^\times.$ 
Then the map $I_{\chi} \mapsto \chi$ induces the following exact sequence
\begin{equation} \label{exact for CC1}
1 \longrightarrow \CC^\times \longrightarrow \mcA(\ts) \longrightarrow I(\ts) \longrightarrow 1.
\end{equation}  
Following \cite[p.11]{hs11}, we denote by $\Irr(\mcA(\ts), \id)$ the set of isomorphism classes of irreducible representations of the group $\mcA(\ts)$ such that $z\cdot \id_{V_{\ts}} \in \CC^\times$ acts as the scalar $z.$ 
Recall the following isomorphism from \cite[Corollary 2.10]{hs11}
\begin{equation}  \label{useful decomp}
V_{\ts} ~ ~ \s \bigoplus_{\xi \in \Irr(\mcA(\ts), \id)} \xi \boxtimes  \si_{\xi}
\end{equation}
as representations of the direct product $\mcA(\ts) \times G.$ 
There is thus a one-to-one correspondence
\begin{equation}  \label{bij 333}
\Irr(\mcA(\ts), \id) \s \Pi_{\ts}(G),
\end{equation}
sending $\xi \mapsto  \si_{\xi}.$ 
By $\xi_{\si}$ we denote the inverse of $\sigma$ via the correspondence \eqref{bij 333}.
The isomorphism \eqref{useful decomp} also yields
\begin{equation}  \label{dim=multi}
\langle \sigma, \ts \rangle_{G} = \dim \xi_{\si}.
\end{equation}
\begin{rem} \label{dim1=dim2}
Given $\ts \in \Irr_{\disc}(\tG),$ since the multiplicity is common, the equality \eqref{dim=multi} implies that 
$
{\dim}\xi_{\si_1} = {\dim}\xi_{\si_2}
$  
for any $\si_1, \si_2 \in \Pi_{\ts}(G).$
\end{rem}

\subsection{Restriction for finite groups}  \label{restriction on finite groups}
Given two finite groups $H$ and  $\tH$ satisfying the property that $H$ is a normal subgroup of $\tH$ with abelian quotient, we shall review some results on the restriction of representations from $\tH$ to $H$ along with Clifford theory. We mainly refer to \cite[(20.7) \& (20.8)]{jamesliebeck01}. 
In principle, all the arguments in the restriction for $p$-adic groups in Section  \ref{restriction on p-adic} hold for finite groups as follows.
Given $\delta \in \Irr(H),$ there exists $\tdel \in \Irr(\tH)$ such that 
\[
\delta \subset {\Res}_{H}^{\tH}(\tdel),
\]
and we employ both $\Pi_{\delta}(H)$ and $\Pi_{\tdel}(H)$ for the set of equivalence classes of all irreducible constituents of ${\Res}_{H}^{\tH}(\tdel).$ 
Further, we have $\Pi_{\delta_1}(H) = \Pi_{\delta_2}(H)$ for any irreducible constituents $\delta_1$ and  $\delta_2$ in ${\Res}_{H}^{\tH}(\tdel),$ 
and the decomposition
 \begin{equation*} \label{decomp of Res for finite}
{\Res}_{H}^{\tH}(\tdel) = \bigoplus _{\theta \in \Pi_{\widetilde\delta}(H)} \langle \theta, \tdel \rangle_{H} \cdot \theta.
\end{equation*}
The multiplicity $\langle \theta, \tdel \rangle_{H}$ here is also common over $\theta \in \Pi_{\widetilde\delta}(H).$ 

We define
\begin{equation*} \label{X(delta)}
I(\tdel):= \{ \eta \in (\tH/H)^\vee : \tdel \s \tdel \eta \},
\end{equation*}
where $(\tH/H)^\vee = \Hom(\tH/H, \CC^{\times})$ of all characters of $\tH/H,$ which are considered as 1-dimensional representation of $\tH$ that are trivial on its subgroup $H.$
Choose $\delta \in \Pi_{\widetilde\delta}(H),$ and define the stabilizer of $\widetilde\delta$ in $\tH$  
\[
\tH_{\delta}:= \{h \in \tH : {^h}{\delta} \s \delta
\}.
\]
It is obvious that $\tH_{\delta}$ satisfies $Z({\tH}) \cdot H \subseteq \tH_{\delta} \subseteq \tH$ and is a normal subgroup of $\tH.$
Further, the quotient $\tH/\tH_{\delta}$ acts by conjugation on the set $\Pi_{\widetilde\delta}(H)$ simply and transitively.
We also have the equality $|I(\tdel)| = |\Pi_{\widetilde\delta}(H)| \cdot \langle \delta, \tdel \rangle_{H}^2,$ since $\tH/H$ is abelian.

\section{Multiplicity in restriction for tempered representations}  \label{section of main results}
We continue with the notation in Sections \ref{backgrounds} and \ref{restriction on p-adic and finite}.
Given two connected reductive algebraic groups $\bG$ and $\tbG$ over $F$ satisfying \eqref{cond on G}, under working hypotheses about the tempered local Langlands conjecture and the internal structure of tempered $L$-packets, 
we construct two crucial bijections in Section \ref{main}.
We then apply these bijections to establish an equality of multiplicities in $p$-adic groups and their $\cS$-groups, 
and formulate the multiplicity in the restriction in $p$-adic groups by means of dimensions of irreducible representations of their $\cS$-groups in Section \ref{both multi}.
\subsection{Two bijections} \label{main}
This section is devoted to constructing two bijections between finite abelian groups occurring in $p$-adic groups and their $\cS$-groups: one between two quotients in $\tG$ and a character group under Working Hypothesis \ref{wh temp llc}, \ref{shahidi's conj}, \ref{last hyp}, and \ref{llast hyp} (see Theorem \ref{thm 1}); and the other one between two quotients in $\cS$-groups and a cohomological group under Working Hypothesis \ref{wh temp llc}, \ref{shahidi's conj}, \ref{last hyp}, and \ref{really llast hyp} (see Theorem \ref{thm 2}).

To begin with, the local Langlands conjecture predicts that there is a surjective, finite-to-one map from $\Irr(G)$ to $\Phi(G),$ which is expected to satisfy various natural properties.
The map preserves $\gamma$-factors, $L$-factors, and $\epsilon$-factors, if they are available in both sides (cf. \cite{ht01, he00}). 
Furthermore, through the map, the subsets $\Phi_{\disc}(G)$ and $\Phi_{\temp}(G)$ in $\Irr(G)$ are characterized by the subsets $\Irr_{\disc}(G)$ and $\Irr_{\temp}(G)$ in $\Phi(G),$ respectively. 

The {first} working hypothesis is the local Langlands conjecture for tempered representations of $p$-adic groups as follows.
\begin{wh} \label{wh temp llc}
The local Langlands conjecture is true for irreducible tempered representations of $G,$ i.e., there is a surjective, finite-to-one map
\begin{equation} \label{llc bij}
{\L_G}: {\Irr}_{\temp}(G) \longrightarrow {\Phi}_{\temp}(G).
\end{equation}
\end{wh}
\begin{rem}
We shall need the temperedness condition so as to state Shahidi's conjecture (Working Hypothesis \ref{shahidi's conj} below) which will be related to other working hypotheses and used in Section \ref{section of main results}.
We may further note that the study of $\Irr(G)$ can be reduced to its subset $\Irr_{\temp}(G),$ due to the Langlands classification theorem (see \cite[Section 3]{konno03} for a proof and survey for the theorem).
Furthermore, we shall refer the reader to \cite[Section 10]{bo79} for several properties that ${\L_G}$ is expected to satisfy.
\end{rem}

Given a tempered $L$-parameter $\vp \in \Phi_{\temp}(G),$ each $L$-packet $\Pi_{\vp}(G):=\L_G^{-1}(\vp)$ is conjectured to be parameterized in terms of the group of connected components of a certain centralizer in the $L$-group.
We recall such conjectural parametrization in detail mainly based on \cite[Section 4.2]{kalsim} and \cite[Section 5.4]{kalrigd15} as follows.

Let $\bG^*$ be a quasi-split form of $\bG$ over $F$ ($\bG^*$ is $\bG$ itself when $\bG$ is quasi-split over $F$) i.e., $\bG^*$ is a connected reductive quasi-split group over $F$ and there is an $\bar{F}$-isomorphism $\xi: \bG \overset{\sim}{\rightarrow} \bG^*$ such that
 $\xi \circ \tau(\xi)^{-1}$ is an inner automorphism ($g \mapsto xgx^{-1}$) defined over $\bar{F}$ for all $\tau \in \Gal (\bar{F} / F)$ (see \cite[2.4(3)]{bo79} or \cite[p.280]{kot97}). 
 Such $\xi$ is called an inner twist.
Fix a Whittaker datum $\mm.$ Here $\mm$ stands for a $G^*$-conjugacy class of pairs $(\bB^*, \psi^*),$ where $\bB=\bT^*\bU^*$ is a Borel subgroup of $\bG^*$ defined over $F$ with maximal $F$-torus $\bT^*$ and unipotent radical $\bU^*,$ and $\psi^*$ is a generic character of $U^*.$
There is a one-to-one correspondence between the set of $T^*$-orbits of the generic characters and the quotient $\bG^*_{\ad}(F)/\imag\big(\bG^*(F) \rightarrow \bG^*_{\ad}(F)\big)$ that is embedded into $H^1(F, Z(\bG^*)).$ 
Given a Whittaker datum $\mm=(\bB^*, \psi^*),$ an irreducible admissible representation $\pi \in \Irr(G^*)$ is called $\mm$-generic if there exists a non-zero $\psi^*$-generic Whittaker functional for $\pi,$ equivalently if $\Hom_{U^*}(\pi, \psi^*) \neq 0$ (for the details, see \cite[Chapter 3]{sh10}). 

The {second} working hypothesis is a strong form of Shahidi's conjecture in \cite[Section 9]{sh90} as follows.

\begin{wh} \label{shahidi's conj}
Assume Working Hypothesis \ref{wh temp llc} is true for $\bG^*.$
Let an $L$-parameter $\vp \in \Phi_{\temp}(G^*)$ be given.  For each Whittaker datum $\mm,$ there is a unique $\mm$-generic member in the $L$-packet $\Pi_{\vp}(G^*)$ associated to $\vp.$
\end{wh}

The next hypothesis is related to the internal structure of $L$-packets. We shall introduce two relevant formulations in order.
The first formulation suggested by Arthur \cite{art06} utilizes a finite group $\cS_{\vp, \scn},$ which is described as follows. By $\widehat{\bG}_{\scn}$ we denote the simply connected cover of the derived group $\widehat{\bG}_{\der}$ of $\widehat{\bG}.$ 
Let $\widehat{\bG}_{\ad}$ be the adjoint group $\widehat{\bG}/Z(\widehat{\bG})$ of $\widehat \bG.$
Then the quotient
\[
S_{\vp}(\widehat{\bG}):=C_{\vp}(\widehat \bG) / Z(\widehat{\bG})^{\Gamma},
\]
is considered as a subgroup in $\widehat{\bG}_{\ad}.$
Via the isogeny $\widehat{\bG}_{\scn} \twoheadrightarrow \widehat{\bG}_{\ad},$ we get the full pre-image of $S_{\vp}(\widehat{\bG}),$ which is denoted by $S_{\vp, \scn}(\widehat{\bG}).$ 
We then have an exact sequence
\begin{equation} \label{exact isogeny}
1 \longrightarrow Z(\widehat \bG_{\scn}) \longrightarrow S_{\vp, \scn}(\widehat{\bG}) \longrightarrow S_{\vp}(\widehat{\bG}) \longrightarrow 1.
\end{equation}
We put 
\begin{align*}
\cS_{\vp}(\widehat{\bG}) &:= \pi_0(S_{\vp}(\widehat{\bG})), \\
\cS_{\vp, \scn}(\widehat{\bG}) & := \pi_0(S_{\vp, \scn}(\widehat{\bG})), \\
\widehat Z_{\vp, \scn}(\bG) & := Z(\widehat \bG_{\scn}) / (Z(\widehat \bG_{\scn}) \cap S_{\vp, \scn}(\widehat{\bG})^{\circ}),
\end{align*}
and have an exact sequence
\begin{equation} \label{central ext}
1 \longrightarrow \widehat Z_{\vp, \scn}(\bG)  \longrightarrow \cS_{\vp, \scn}(\widehat{\bG}) \longrightarrow \cS_{\vp}(\widehat{\bG}) \longrightarrow 1.
\end{equation}

Consider again an inner twist $\xi: \bG^* \to \bG$ with $\bG^*$ quasi-split form of $\bG.$
Choose a character $\zeta_{\bG}$ of $Z(\widehat \bG_{\scn})$ whose restriction to $Z(\widehat \bG_{\scn})^{\Gamma}$ corresponds to 
the class of the $F$-inner form $\bG$ of $\bG^*$ via the Kottwitz isomorphism \cite[Theorem 1.2]{kot86} (note here that $\widehat \bG = \widehat \bG^*$).  
Following \cite[p.549-550]{art12}, we denote by $\Irr(\cS_{\vp, \scn}(\widehat{\bG}), \zeta_{\bG})$ the set of irreducible representations of $\cS_{\vp, \scn}(\widehat{\bG})$ that is equivariant under the pullback of $\zeta_{\bG}$ to $\widehat Z_{\vp, \scn}(\bG)$ (cf., \cite[Section 4.6]{kalrigd15-global}, \cite[p.209]{art06}).

Given a \textit{tempered} $L$-parameter $\vp$ for $G$ and a \textit{tempered} $L$-packet $\Pi_{\vp}(G) $ associated to $\vp,$ 
it is conjectured \cite[Section 3]{art06} that
there is a one-to-one correspondence 
\begin{equation} \label{internal st}
\Pi_{\vp}(G) \overset{1-1}{\longleftrightarrow} \Irr(\cS_{\vp, \scn}(\widehat{\bG}), \zeta_{\bG}).
\end{equation}
This conjecture provides the internal structure of $\Pi_{\vp}(G).$
In particular, when $\bG^*=\bG,$  the character $\zeta_{\bG}$ equals the trivial character $\mathbbm{1}$ and it is clear that 
$\Irr(\cS_{\vp, \scn}(\widehat{\bG}), \mathbbm{1}) = \Irr(\cS_{\vp}(\widehat{\bG})).$

The other formulation suggested by Kaletha \cite{kalsim} utilizes a finite group $\cS_{\vp}^+,$ which is described as follows. 
Fix a finite subgroup $\bZ \subset Z(\bG^*)$ defined over $F$ and set $\bar \bG^* := \bG^*/\bZ.$ We then have an isogeny $\widehat{\bar \bG^*} \twoheadrightarrow \widehat{\bG^*}$ dual to  the isogeny $\bG^* \twoheadrightarrow \bar \bG^*.$
For our purpose, we only consider the case that $\bZ = Z(\bG_{\der}^*)$ from now on, though the following notions are available in the general case of $\bZ.$
According to \cite[Section 3]{kalsim} and \cite[Section 5]{kalrigd15}, the new cohomology set, denoted by
$H^1(u \rightarrow W, \bZ \rightarrow \bG^*),$ has the property that there is a surjective map
\begin{equation} \label{surj on cohos}
H^1(u \rightarrow W, \bZ \rightarrow \bG^*) \twoheadrightarrow H^1(F, \bG^*/Z(\bG^*)).
\end{equation}
We note from \cite[Corollary 3.8]{kalsim} that the map \eqref{surj on cohos} is bijective when $\bG^*$ is split.
Following \cite[Section 5.1]{kalrigd15} and \cite[Section 4]{kalsim}, a rigid inner twist $(\xi', z')$ is defined as a pair consisting of an inner twist $\xi':\bG' \rightarrow \bG^*$ and an element $z' \in Z^1(u \rightarrow W, \bZ \rightarrow \bG^*)$ such that for all $\tau \in \Gamma,$ we have $\xi'^{-1}\tau(\xi')=Ad(\bar z'(\tau)),$ where $\bar z' \in Z^1(F, \bG^*/Z(\bG^*))$ is the image of $z'$ under the map \eqref{surj on cohos}.

\begin{rem}
Since the map \eqref{surj on cohos} is surjective, we have $z' \in Z^1(u \rightarrow W, \bZ \rightarrow \bG^*)$ for every inner twist $\xi':\bG' \rightarrow \bG^*$ such that $(\xi', z')$ is a rigid inner twist (see \cite[Section 5.4]{kalrigd15}). Thus, since $\bG^*$ is a quasi-split form of $\bG$ over $F,$ there is a corresponding element in $Z^1(u \rightarrow W, \bZ \rightarrow \bG^*)$ to the given connected reductive group $\bG$ over $F$ and inner twist $\xi: \bG \rightarrow \bG^*.$ 
\end{rem}

Let $\vp \in \Phi_{\temp}(G^*)$ be a tempered $L$-parameter. 
For each rigid inner twist $(\xi', z'): \bG' \rightarrow \bG^*$ with $z' \in Z^1(u \rightarrow W, \bZ \rightarrow \bG^*),$ we let $\Pi_\vp((\xi', z'))$ be the $L$-packet $\Pi_\vp(G')$ under Working Hypothesis \ref{wh temp llc}.
We note that $\Pi_\vp((id, 1))=\Pi_\vp(G^*).$ Also, due to the relevance condition, we have $\Phi_{\temp}(G') \subset \Phi_{\temp}(G^*)$ and $\Pi_\vp(G')$ may be empty if $\vp$ is not relevant to $\bG',$ i.e., $\vp \notin \Phi_{\temp}(G')$ (see \cite[Section 5.4]{kalrigd15}).
Recalling the centralizer $C_\vp$ in $\widehat{\bG^*}$ of the image of $\vp,$ we denote by $S_\vp^+$ the preimage of $C_\vp$ in $\widehat{\bar \bG^*}$ under the isogeny $\widehat{\bar \bG^*} \twoheadrightarrow \widehat{\bG^*}.$
We let denote $\cS^+_{\vp}$ the group $\pi_0(S_{\vp}^+)$ of connected components of $S^+_{\vp},$ which is finite, and let $\Irr(\cS^+_{\vp})$ denote the set of irreducible representations of $\pi_0(S_{\vp}^+).$ 
Given a tempered $L$-parameter $\vp \in \Phi_{\temp}(G^*)$ and a Whittaker datum $\mm,$ there exists a bijective map
\begin{equation} \label{for 3rd wh}
\iota_{\mm} : \bigsqcup_{(\xi', z')} \Pi_\vp((\xi', z')) \longrightarrow \Irr(\cS_\vp^+),
\end{equation}
such that the image of the unique $\mm$-generic member of $\Pi_\vp((id, 1))$ is the trivial representation of $\cS_\vp^+.$
Let a rigid inner twist $(\xi', z'): \bG' \rightarrow \bG^*$ be given. Recalling the notation in \cite[Section 4.1]{kalsim}, we consider a refined endoscopic quadruple $\dot\ee=(G^\ee, \mathcal G^\ee, s^{\dot\ee}, \eta^\ee)$ for $\bG^*.$
For any $\Delta[\mm, \dot\ee, z']$-matching functions $f^{\dot\ee} \in C_c^\infty(G^{\ee})$ and $f'\in C_c^\infty(G'),$ we have the equality 
\[
\Theta^1_{\vp^\ee}(f^{\dot\ee})=e(G') \sum_{\pi \in \Pi_{\vp}((\xi', z'))} \langle \pi', s^{\dot\ee} \rangle \Theta_{\pi'}(f'), 
\]
where $\vp^\ee \in \Phi_{\temp}(G^\ee)$ is such that $\vp = {^L}\eta^\ee \circ \vp^\ee$ and $\langle \pi', - \rangle = tr(\iota_\mm(\pi') (-)).$

It is important to discuss the relationship between two formulations: one is in terms of $\cS_\vp^+$ and the other is $\cS_{\vp, \scn}.$ Following \cite[Section 4.6]{kalsim} and \cite[Section 4.6]{kalrigd15-global}, the character $\zeta_{\bG}$ of $Z(\widehat \bG_{\scn})$ that is described below Equation \eqref{central ext}, 
determines a cohomology class in $H^1(u \rightarrow W, Z(\bG^*_{\scn}) \rightarrow \bG^*_{\scn})$ (due to being in the non-archimedean case) whose image in $H^1(u \rightarrow W, Z(\bG^*_{\der}) \rightarrow \bG^*)$ gives a character, denoted by $\zeta_{\bG}^+,$ of $\pi_0(Z(\widehat{\bar G^*})^+).$ Here $Z(\widehat{\bar G^*})^+$ is the preimage in $\widehat{\bar G^*}$ under the isogeny $\widehat{\bar \bG^*} \twoheadrightarrow \widehat{\bG^*}$ of the diagonalizable group $Z(\widehat{G^*})^{\Gamma}.$ It is also noted that the character $\zeta_{\bG}^+$ is the pull-back of $\zeta_{\bG}$ under the map $\pi_0(Z(\widehat{\bar G^*})^+) \rightarrow Z(\widehat \bG_{\scn}),$ since we work with the case $\bZ = Z(\bG_{\der}^*).$
We then have the following bijection
\begin{equation} \label{useful 1-1}
\Irr(\cS_\vp^+, \zeta_{\bG}^+) \overset{1-1}{\longleftrightarrow} \Irr(\cS_{\vp, \scn}(\widehat{\bG}), \zeta_{\bG}).
\end{equation}
We refer the reader to \cite[p.244]{kalsim} and \cite[(4.6)]{kalrigd15-global} for further differences between these two groups.

\begin{rem} \label{rem for int structure}
The bijection \eqref{internal st} depends on a certain choice of data. For quasi-split $\bG$ with a tempered $L$-parameter $\vp,$ the choice of Whittaker datum yields a unique bijection in \eqref{internal st} so that a unique generic representation in $\Pi_{\vp}(G)$ with respect to the Whittaker datum corresponds to the trivial representation of $\Irr(\cS_{\vp}(\widehat{\bG})).$ 
For non-quasi-split $\bG$ with a tempered $L$-parameter $\vp,$ despite the absence of the Whittaker datum, Kaletha carried out the dependence and made the bijection \eqref{internal st} canonical by means of a certain cohomological group related to Galois gerbes in \cite{kalrigd15}. To be precise, we consider an inner twist $\xi: \bG^* \to \bG$ with $\bG^*$ quasi-split form of $\bG,$ and an element $z$ of $Z^1(u \rightarrow W, Z(\bG^*_{\scn}) \rightarrow \bG^*_{\scn})$ that lifts the element of $Z^1(F, \bG^*_{\ad})$ given by $\xi^{-1}\tau(\xi)$ for $\tau \in \Gamma$  (see \cite[Section 4]{kalrigd15-global} for more details). Then, fixing a Whittaker datum for $\bG^*,$ the datum of a pair $(\xi,z)$ turns out to make the bijection \eqref{internal st} \textbf{canonical}. In particular, if $\bG$ is quasi-split and $\bG^*=\bG,$ then the pair $(\xi,z)$ can be taken to be the trivial pair $(id,1),$ so that the fixed Whittaker datum for $\bG$ alone is sufficient to make the bijection \eqref{internal st} canonical, as explained in the case of quasi-split $\bG$ above. 
The author would like to thank Kaletha for valuable communications on this remark.
\end{rem}

The {third} working hypothesis is the internal structure of tempered $L$-packets as follows.
\begin{wh} \label{last hyp}
Let $\bG$ be a connected reductive algebraic group over $F.$ Consider an inner twist $\xi: \bG^* \to \bG$ with $\bG^*$ quasi-split form of $\bG$ and an element $z$ of $Z^1(u \rightarrow W, Z(\bG^*_{\scn}) \rightarrow \bG^*_{\scn})$ that lifts the element of $Z^1(F, \bG^*_{\ad})$ given by $\xi^{-1}\sigma(\xi).$ 
Assume Working Hypotheses \ref{wh temp llc} and \ref{shahidi's conj}. Given a tempered $L$-parameter $\vp \in \Phi_{\temp}(G^*)$ and a Whittaker datum $\mm,$ the bijection \eqref{for 3rd wh} holds for  $\bG,$ so that the bijection \eqref{internal st} is canonical, due to Remark \ref{rem for int structure} and the bijection \eqref{useful 1-1}.
\end{wh}
\begin{rem} \label{after 3rd wh}
We shall use $\cS_{\vp, \scn}$ in this paper rather than $\cS_\vp^+,$ under Working Hypothesis \ref{last hyp}, due to the complexity of the group structure of $\cS_\vp^+.$ 
\end{rem}

Now we construct two bijections in Theorems \ref{thm 1} and \ref{thm 2} below.
From now on, we assume that Working Hypotheses \ref{wh temp llc}, \ref{shahidi's conj}, and \ref{last hyp} are valid for $G$ and $\tG.$
Due to \cite[Section 1]{kot84}, the exact sequence given by the condition \eqref{cond on G}
\begin{equation} \label{before an exact before a def}
1 \longrightarrow \bG \longrightarrow \tbG \longrightarrow \tbG/\bG \longrightarrow 1
\end{equation}
yields
\begin{equation} \label{an exact before a def}
1 \longrightarrow \widehat{\tbG/\bG} \longrightarrow \widehat \tbG \overset{pr}{\longrightarrow} \widehat \bG \longrightarrow 1
\end{equation}
(see also \cite[Remark 2.4]{chgo12}). 
Thus, the kernel $\widehat S$ in \eqref{Labesse situation} is now $\widehat {(\tbG/\bG)}.$ 

Let $\vp \in \Phi_{\temp}(G)$ be given, and choose a lifting $\tvp \in \Phi_{\temp}(\tG)$ of $\vp,$ namely $\vp=pr \circ \tvp,$ as in Theorem \ref{thm by Labesse}. 
The surjective map $pr: \widehat{\tbG} \rightarrow \widehat{\bG}$  in \eqref{an exact before a def}  induces a natural map between centralizers
\begin{equation*} \label{prC}
pr_C: C_{\tvp}(\widehat \tbG) \longrightarrow C_{\vp}(\widehat \bG).
\end{equation*}
The kernel of $pr_C$ then equals the subgroup $\big(\widehat{\tbG/\bG}\big)^\Gamma$ of $\Gamma$-invariants in the kernel of $pr$ in \eqref{an exact before a def}. 
Applying \eqref{before an exact before a def} to \cite[(1.8.1)]{kot84} and taking $\Gamma$-invariants, we have
\begin{equation} \label{hat exact}
1 \longrightarrow \big(\widehat{\tbG/\bG}\big)^\Gamma \longrightarrow Z(\widehat \tbG)^\Gamma  \longrightarrow Z(\widehat \bG)^\Gamma \longrightarrow H^1(F, \widehat{\tbG/\bG}) \longrightarrow H^1(F, Z(\widehat \tbG))  \longrightarrow \cdots.
\end{equation}

As noted in Section \ref{notation},  the restriction map $H^1(F, \widehat{\tbG/\bG}) \rightarrow H^1(W_F, \widehat{\tbG/\bG})$ yields the image, $\imag\big(Z(\widehat \bG)^\Gamma \rightarrow H^1(W_F, \widehat{\tbG/\bG})\big),$ of $Z(\widehat \bG)^\Gamma$ in $H^1(W_F, \widehat{\tbG/\bG}),$
and define the following quotient
\begin{equation} \label{def of X}
X(\tvp) := \{ a \in H^1(W_F, \widehat {(\tbG/\bG)}) : a \tvp \s \tvp \mbox{ in } \widehat{\tbG} \}\big/\imag\big(Z(\widehat \bG)^\Gamma \rightarrow H^1(W_F, \widehat{\tbG/\bG})\big).
\end{equation}
\begin{rem} \label{rem for X}
The set $\{ a \in H^1(W_F, \widehat {(\tbG/\bG)}) : a \tvp \s \tvp \mbox{ in } \widehat{\tbG} \}$ is a finite abelian group (cf., \cite[p.74]{chaoli}), so is $X(\tvp).$ 
Furthermore, in \cite[Lemma 5.3.4]{chaoli}, since $Z(\widehat \bG^\sharp)^\Gamma$ is the identity of the quotient group $S_{\phi^\sharp, \ad}$ therein (note that their notation $\bG, \phi, S_{\phi, \ad}, \bG^\sharp, \phi^{\sharp},$ and $S_{\phi^\sharp, \ad}$ respectively correspond to our notation $\tbG, \tvp, S_{\tvp}(\widehat{\tbG}), \bG, \vp,$ and $S_{\vp}(\widehat{\bG})$), 
it seems that their group $X^G(\phi)$ needs to take quotient by $\imag\big(Z(\widehat \bG^\sharp)^\Gamma \rightarrow H^1(W_F, \widehat{Z}^\sharp)\big),$ though their lemma remains unchanged, as suggested from \eqref{commutative diagrams for main} below (see also \cite[(3.4) and (3.5)]{xu15}). 
We further refer the reader to \cite[Theorem 4.3]{gk82} and \cite[Proposition 2.9]{gtsp10} where some other groups than $X(\tvp)$ were discussed in different situation.
\end{rem}
In the following lemma, we slightly enhance \cite[Lemma 5.3.4]{chaoli} by adding the injective argument.
\begin{lm}  (\cite[Lemma 5.3.4]{chaoli}) \label{lemma of Chaoli}
With the above notation, given $\vp \in \Phi_{\temp}(G)$ and $\tvp \in \Phi_{\temp}(\tG)$ with $\vp = \tvp \circ pr$ as in Theorem \ref{thm by Labesse},  we have an exact sequence of finite groups
\[
1 \longrightarrow \cS_{\tvp}(\widehat{\tbG}) \longrightarrow \cS_{\vp}(\widehat{\bG}) \longrightarrow X(\tvp) \longrightarrow 1.
\]
\end{lm}
\begin{proof}
From \cite[Lemma 5.3.4]{chaoli} and Remark \ref{rem for X}, we have an exact sequence
\begin{equation} \label{Chao-Li exact}
S_{\tvp}(\widehat{\tbG}) \longrightarrow S_{\vp}(\widehat{\bG}) \longrightarrow X(\tvp) \longrightarrow 1.
\end{equation}
It now remains to show that $\ker(\cS_{\tvp}(\widehat{\tbG}) \rightarrow \cS_{\vp}(\widehat{\bG}))$ is trivial. 
Combining \eqref{hat exact} and \eqref{Chao-Li exact},
we have the following commutative diagram:
\begin{equation} \label{commutative diagrams for main}
\begin{CD}
@. 1 @. 1 @. 1 \\
@. @VVV @VVV @VVV @.\\
1 @>>> \big(\widehat{\tbG/\bG}\big)^\Gamma @>>> Z(\widehat \tbG)^\Gamma @>>> Z(\widehat \bG)^\Gamma @>>> H^1(F, \widehat{\tbG/\bG}) \\
@. @| @VV{\cap}V @VV{\cap}V @|\\
1 @>>> \big(\widehat{\tbG/\bG}\big)^\Gamma @>>> C_{\tvp}(\widehat \tbG) @>>> C_{\vp}(\widehat \bG)@>>> H^1(F, \widehat{\tbG/\bG}) \\
@. @VVV @VVV @VVV @. \\
@. 1@>>>S_{\tvp}(\widehat{\tbG}) @>>> S_{\vp}(\widehat{\bG}) @>>> X(\tvp) \\
@. @. @VVV @VVV @.\\
@. @.  1 @. 1 
\end{CD}
\end{equation}
The exactness on the bottom of \eqref{commutative diagrams for main}:
\[
1 \longrightarrow S_{\tvp}(\widehat{\tbG}) \longrightarrow S_{\vp}(\widehat{\bG})
\]
and the connectedness argument (cf., \cite[Lemma 4.8]{ch15}) thus verify that
\[
\ker\Big(\cS_{\tvp}(\widehat{\tbG}) \rightarrow \cS_{\vp}(\widehat{\bG})\Big) = \{1\}.
\]
Therefore, the proof of Lemma \ref{lemma of Chaoli} is complete.
\end{proof}
\begin{rem} \label{rem for X to H}
The continuous homomorphism $W_F \rightarrow \Gamma$ has dense image (see Section \ref{notation}), the exact sequence \eqref{hat exact} can be replaced by
\begin{equation} \label{exact with W_F}
1 \longrightarrow \big(\widehat{\tbG/\bG}\big)^\Gamma \longrightarrow Z(\widehat \tbG)^\Gamma  \longrightarrow Z(\widehat \bG)^\Gamma \longrightarrow H^1(W_F, \widehat{\tbG/\bG}) \longrightarrow H^1(W_F,  Z(\widehat \tbG))  \longrightarrow \cdots.
\end{equation}
Accordingly, $H^1(F, -)$ in the diagram \ref{commutative diagrams for main} can be replaced by  $H^1(W_F, -)$ (cf., \cite[Section 4]{karpuk13} and \cite[Section 3.2]{xu15}).
\end{rem}

We first consider the case that $\vp$ is elliptic (so is $\tvp$), i.e., $C_{\phi}(\widehat \bG) / Z(\widehat{\bG})^{\Gamma}$ is finite.
Then, 
$S_{\vp}(\widehat{\bG})$ itself equals $\cS_{\vp}(\widehat{\bG})$ and
$\widehat Z_{\vp, \scn}(\widehat{\bG}_{\scn})$ is identical with $Z(\widehat \bG_{\scn}).$
Hence, the exact sequence \eqref{central ext} is equal to \eqref{exact isogeny}. 
Combining Lemma \ref{lemma of Chaoli} and the exact sequence \eqref{exact isogeny} for $\tbG$ and $\bG,$
we have the following commutative diagram:
\begin{equation} \label{commutative diagrams for Sp11}
\begin{CD}
@. @. 1 @. 1 \\
@. @. @VVV @VVV @.\\
1 @>>> Z(\widehat \tbG_{\scn}) @>>> S_{\tvp, \scn}(\widehat{\tbG}) @>>> S_{\tvp} (\widehat{\tbG}) @>>> 1 \\
@. @| @VV{\cap}V @VV{\cap}V @.\\
1 @>>> Z(\widehat \bG_{\scn}) @>>> S_{\vp, \scn}(\widehat{\bG}) @>>> S_{\vp}(\widehat{\bG}) @>>> 1 \\
@. @VVV @VVV @VVV @. \\
@. 1@>>>S_{\vp, \scn}(\widehat{\bG})\big/S_{\tvp, \scn}(\widehat{\tbG}) @>{\s}>> X(\tvp) @>>> 1\\
@. @. @VVV @VVV @.\\
@. @.  1 @. 1
\end{CD}
\end{equation}
The bottom isomorphism comes from the Snake Lemma (cf., \cite[Corollary 6.12]{rotman09}) which is applied to the other two horizontal exact sequences.
We thus have the following exact sequence (the middle vertical one):
\[
1 \longrightarrow S_{\tvp, \scn}(\widehat{\tbG})  \longrightarrow S_{\vp, \scn}(\widehat{\bG})  \longrightarrow X(\tvp) \longrightarrow 1.
\]

For tempered $L$-parameters, from \cite[Lemma 4.8]{ch15}, we have $S_{\tvp, \scn}(\widehat{\tbG})^\circ = S_{\vp, \scn}(\widehat{\bG})^\circ,$ which implies that $\widehat Z_{\vp, \scn}(\tbG) = \widehat Z_{\vp, \scn}(\bG)$ by definition.
Thus, it follows from the same way as in \eqref{commutative diagrams for Sp11} that
\begin{equation} \label{imp exact}
1 \longrightarrow \cS_{\tvp, \scn}(\widehat{\tbG})  \longrightarrow \cS_{\vp, \scn}(\widehat{\bG})  \longrightarrow X(\tvp) \longrightarrow 1.
\end{equation}

Given $\sigma \in \Pi_{\vp}(G),$ we fix a lifting $\ts \in \Pi_{\tvp}(\tG).$  The internal structure \eqref{internal st} for $G$ and $\tG$ provides two finite representations $\rho \in \Irr(\cS_{\vp, \scn}(\widehat \bG), \zeta_{\bG} )$ and $\trho \in  \Irr(\cS_{\tvp, \scn}(\widehat{\tbG}), \zeta_{\tbG} )$ corresponding to $\si$ and $\ts,$ respectively. 
Note that the condition \eqref{cond on G} yields $\zeta_{\bG} =\zeta_{\tbG} .$
We fix $\sigma$ corresponding to $\rho$ and $\ts$ corresponding to $\trho,$ under the conjectural bijection \eqref{internal st} (cf., Remark \ref{rem for int structure}).
For simplicity of notation, we set $\cS_{\vp, \scn}(\widehat \bG)=\cS_{\vp, \scn}$ and $\cS_{\tvp, \scn}(\widehat \tbG)=\cS_{\tvp, \scn}.$

Applying Galois cohomology to \eqref{before an exact before a def}, we have the following exact sequence
\begin{equation} \label{exact of G tG}
1 \rightarrow G \rightarrow \tG \rightarrow (\tbG/\bG)(F) \rightarrow H^1(F, \bG) \rightarrow  H^1(F, \tbG) \rightarrow H^1(F, \tbG/\bG).
\end{equation}
Since the cokernel of $G \rightarrow \tG$ in \eqref{exact of G tG} is embedded into the $F$-points $(\tbG/\bG)(F)$ of $\tbG/\bG,$ we have
\[
 1 \longrightarrow \tG/G \longrightarrow (\tbG/\bG)(F) \longrightarrow (\tbG/\bG)(F)/(\tG/G) \longrightarrow 1.
\]
Taking the dual $(~-~)^\vee = \Hom(~-~, \CC^{\times})$ and using the local Langlands correspondence for $\tbG/\bG$ (cf., \cite{yu09}), we have  
\begin{equation} \label{exact 111}
 1 \longrightarrow \Big( (\tbG/\bG)(F)/(\tG/G) \Big)^\vee \longrightarrow\Big(  (\tbG/\bG)(F) \Big)^\vee \s H^1(W_F, \widehat{\tbG/\bG}) \longrightarrow \Big( \tG/G \Big)^\vee  \longrightarrow 1.
\end{equation}
Furthermore, the exact sequence
\eqref{exact of G tG} and the bijection $H^1(F, \bG) \s \pi_0(Z(\widehat \bG)^{\Gamma})^\vee$ (\cite[Proposition 6.4]{kot84}) yield 
\begin{equation*} \label{exact 222}
 1 \longrightarrow  (\tbG/\bG)(F)/(\tG/G) \longrightarrow \pi_0(Z(\widehat \bG)^{\Gamma})^\vee  \longrightarrow 
  \pi_0(Z(\widehat \bG)^{\Gamma})^\vee / \ker\Big( \pi_0(Z(\widehat \bG)^{\Gamma})^\vee \rightarrow \pi_0(Z(\widehat \tbG)^{\Gamma})^\vee \Big)
\end{equation*}
and we then have a surjective map
\[
\pi_0(Z(\widehat \bG)^{\Gamma})  \twoheadrightarrow \Big((\tbG/\bG)(F)\big/(\tG/G) \Big)^\vee.  
\]
Using \eqref{exact 111} and the surjection $Z(\widehat \bG)^{\Gamma} \twoheadrightarrow \pi_0(Z(\widehat \bG)^{\Gamma}),$ we then have
\begin{equation}  \label{for ker indeed} 
Z(\widehat \bG)^{\Gamma}  \twoheadrightarrow  \ker \Big(  \big((\tbG/\bG)(F)\big)^\vee  \twoheadrightarrow \big(\tG/G\big)^\vee\Big). 
\end{equation} 
From \eqref{exact with W_F} that the set $\imag\big( Z(\widehat \bG)^\Gamma \rightarrow H^1(W_F, \widehat{\tbG/\bG})\big)$ vanishes in $H^1(W_F,  Z(\widehat \tbG)),$ and it induces the trivial character on $\tG$ via the map, $H^1(W_F,  Z(\widehat \tbG)) \rightarrow (\tG)^\vee,$ described in \cite[Section 10.2]{bo79} (see also \cite[Appendix A]{xu15} for quasi-split cases).
From this argument, \eqref{exact 111} and \eqref{for ker indeed}, 
it follows that the image of $Z(\widehat \bG)^\Gamma$ in $H^1(W_F, \widehat {(\tbG/\bG)})  \s \big((\tbG/\bG)(F)\big)^\vee$
equals the kernel of 
the surjective map
\begin{equation} \label{surj}
 \Big((\tbG/\bG)(F)\Big)^\vee  \twoheadrightarrow \big(\tG/G\big)^\vee
\end{equation}
(see also \cite[Section 4]{kal2genric}, \cite[Section 5.5]{kalinv}, and \cite[Section 3]{xu15} for similar arguments in a different setting).
Recalling the definition of $X(\tvp)$ in \eqref{def of X}, we then have
\begin{equation}  \label{for pre hom}
X(\tvp) 
\hookrightarrow \Big(H^1(W_F, \widehat {(\tbG/\bG)}) \big/ \imag\big(\Z(\widehat \bG)^\Gamma \rightarrow H^1(W_F, \widehat{\tbG/\bG})\big) \Big)
 \overset{\s}{\longrightarrow} \big(\tG/G \big)^\vee.
\end{equation}
Combining the isomorphism $X(\tvp) \s  \cS_{\vp, \scn}/\cS_{\tvp, \scn}$ from \eqref{imp exact}, we have 
\begin{equation}  \label{for hom}
\tG/G
\overset{\s}{\longrightarrow} \Big(H^1(W_F, \widehat {(\tbG/\bG)}) \big/ \imag\big(\Z(\widehat \bG)^\Gamma \rightarrow H^1(W_F, \widehat{\tbG/\bG})\big)\Big)^\vee 
\twoheadrightarrow  \Big(\cS_{\vp, \scn}/\cS_{\tvp, \scn} \Big)^\vee.
\end{equation}
Hence, we obtain the following surjective homomorphism 
\begin{equation} \label{pre defined map for thm1}
\Lambda: \tG/G \twoheadrightarrow (\cS_{\vp, \scn}/\cS_{\tvp, \scn})^\vee.
\end{equation}

The {fourth} working hypothesis proposes a certain correspondence, related to the map $\Lambda$ in \eqref{pre defined map for thm1}, which will be assumed for Theorem \ref{thm 1}.
\begin{wh} \label{llast hyp}
Let $g \in \tG/G$ be given. 
Denote by $\chi_g \in  (\cS_{\vp, \scn}/\cS_{\tvp, \scn})^\vee$ the image of $g$ under the map $\Lambda.$
We assume that  ${^g}\sigma$ corresponds to $\rho \chi_g$ via the bijection \eqref{internal st} for $G.$
\end{wh}
\begin{rem} \label{rem for 4th wh}
Working Hypothesis \ref{llast hyp} is discussed for quasi-split groups in a different setting, and deduced by means of the conjectural endoscopic character identity (see \cite[Theorem 2.6 and Lemma 3.13]{xu15}).
\end{rem}

From Section \ref{restriction on p-adic and finite}, 
we recall the set
$I(\rho) = \{\chi \in (\cS_{\vp, \scn}/\cS_{\tvp, \scn})^\vee : \rho \chi \s \rho \},$ and consider the quotient group $(\cS_{\vp, \scn}/\cS_{\tvp, \scn})^\vee/I(\rho).$ The first bijection is as follows.

\begin{thm} \label{thm 1}
Suppose that Working Hypotheses \ref{wh temp llc}, \ref{shahidi's conj}, and \ref{last hyp} are valid for $G$ and $\tG,$ and further that Working Hypothesis \ref{llast hyp} holds.
Then we have the following isomorphism of finite abelian groups
\[
\lambda: \tG/\tG_{\si} \overset{\s}{\longrightarrow} (\cS_{\vp, \scn}/\cS_{\tvp, \scn})^\vee \big/ I(\rho).
\]
\end{thm}
\begin{proof}
Consider the following homomorphism 
\[
\bar\Lambda: \tG/G \longrightarrow   (\cS_{\vp, \scn}/\cS_{\tvp, \scn})^\vee/I(\rho).
\]
Since $\bar\Lambda$ is surjective due to the definition of $\Lambda$ in \eqref{pre defined map for thm1}, it suffices to show that $\ker  \bar\Lambda = \tG_\sigma/G.$
To this end, for any $g \in \tG/G$ with $\Lambda(g)=\chi_g \in I(\rho),$ due to  Working Hypothesis \ref{llast hyp}, ${^g}\si$ corresponds to $\rho$ via \eqref{internal st}. Since $\si$ corresponds to $\rho$ via \eqref{internal st}, it follows that ${^g}\si \s \si,$ and thus $g \in \tG_{\si}/G.$ 
On the other hand, for any $g \in \tG_{\si}/G,$  ${^g}\si$ corresponds to $\rho\chi_g$ via \eqref{internal st}. Since $\si \s {^g}\si,$ we have $\rho\chi_g \s \rho,$ and thus $\chi_g \in I(\rho).$
\end{proof}

\begin{rem}
Since $\tG/ \tG_{\si}$ simply transitively acts on $\Pi_{\ts}(G)$ (Proposition \ref{simply transitive on the set}), Theorem \ref{thm 1} immediately implies that there is a bijection between $\Pi_{\ts}(G)$ and $(\cS_{\vp, \scn}/\cS_{\tvp, \scn})^\vee \big/ I(\rho),$ sending $\si'\in\Pi_{\ts}(G)$ with $\si'={^g}\si$ for some $g \in \tG_\si/G$ to $\chi_g \in (\cS_{\vp, \scn}/\cS_{\tvp, \scn})^\vee \big/ I(\rho).$
\end{rem}

Combining the map \eqref{surj} and the isomorphism $H^1(W_F, \widehat {(\tbG/\bG)}) \s \big((\tbG/\bG)(F)\big)^\vee,$ it is noted that $\chi \in I(\ts)$ (see the definition \eqref{X(sigma)}) gives a 1-cocycle $a_\chi \in H^1(W_F, \widehat{\tbG/\bG})$ such that $\tvp a_\chi \s \tvp.$
We then have the following embedding 
\begin{equation} \label{I and X}
I(\ts) \hookrightarrow \{ a \in H^1(W_F, \widehat {(\tbG/\bG)}) : a \tvp \s \tvp \mbox{ in } \widehat{\tbG} \}\big/\imag\big(Z(\widehat \bG)^\Gamma \rightarrow H^1(W_F, \widehat{\tbG/\bG})\big) = X(\tvp).
\end{equation}
Note that this injection can also follow from the desiderata of the local Langlands correspondence (cf., \cite[Section 10]{bo79}).
We denote by $\bar I(\ts)$ the image 
of $I(\ts)$ in $X(\tvp)$ via \eqref{I and X}, and consider the quotient group $X(\tvp)/ \bar I(\ts).$
Using the isomorphism $X(\tvp) \s  \cS_{\vp, \scn}/\cS_{\tvp, \scn}$ in \eqref{imp exact}, we consider the following surjective homomorphism
\begin{equation} \label{for thm 2}
\widehat{\Lambda}: \cS_{\vp, \scn}/\cS_{\vp, \scn} \longrightarrow X(\tvp)/ \bar I(\ts).
\end{equation}

The {last} working hypothesis propose a counterpart of Working Hypothesis \ref{llast hyp}, related to the map $\widehat{\Lambda}$ in \eqref{for thm 2}, which will be assumed for Theorem \ref{thm 2}.
\begin{wh} \label{really llast hyp}
Let $s \in \cS_{\vp, \scn}/\cS_{\tvp, \scn}$ be given.
Denote by $\chi_s \in (\tG/G)^\vee$ the image of $s$ under the composition of the isomorphism $X(\tvp) \s  \cS_{\vp, \scn}/\cS_{\tvp, \scn}$ in \eqref{imp exact} and the maps in \eqref{for pre hom}.
We assume that $\tsigma \chi_s$ corresponds to ${^s}\trho$ via the bijection \eqref{internal st} for $\tG.$
\end{wh}
\begin{rem}
Working Hypothesis \ref{really llast hyp} is discussed for quasi-split groups in a different setting (see \cite[Sections 6.2 and 6.3]{xu15}).
In addition, this hypothesis implies that
the representation $\tsigma \chi_s$ lies in $\Pi_{\tvp}$ and $\Pi_{\tsigma}(G)=\Pi_{\tsigma \chi_s}(G) \subset \Pi_{\vp}(G).$ 
In fact, one can notice that the character $\chi_s$ is unitary, since it is given by an element in the finite abelian group $\{ a \in H^1(W_F, \widehat {(\tbG/\bG)}) : a \tvp \s \tvp \mbox{ in } \widehat{\tbG} \}$ corresponding to $s$ through  \eqref{for pre hom} where the local Langlands correspondence for $\widehat {(\tbG/\bG)}$ is applied.   
\end{rem}
As above, recalling the set $(\cS_{\vp, \scn})_{\trho} = \{s \in \cS_{\vp, \scn} : {^s}\trho \s \trho \},$
we consider the quotient $\cS_{\vp, \scn}/ (\cS_{\vp, \scn})_{\trho}.$ The other bijection is as follows.
\begin{thm} \label{thm 2}
Suppose that Working Hypotheses \ref{wh temp llc}, \ref{shahidi's conj}, and \ref{last hyp} are valid for $G$ and $\tG,$ and further that Working Hypothesis \ref{really llast hyp} holds.
Then, there is an isomorphism of finite abelian groups
\[
\widehat \lambda:  \cS_{\vp, \scn}/ (\cS_{\vp, \scn})_{\trho} \overset{\s}{\longrightarrow} X(\tvp)/\bar I(\ts).   
\]
\end{thm}
\begin{proof}
It suffices to show that the kernel of $\widehat \Lambda$ in \eqref{for thm 2} equals $(\cS_{\vp, \scn})_{\trho}/\cS_{\tvp, \scn}.$
Let $s \in \cS_{\vp, \scn}/\cS_{\tvp, \scn}$ be given such that $\widehat \lambda(s)=1.$ 
Then, recalling the image $\chi_s$ in $(\tG/G)^\vee$ of $s$ as described in Working Hypothesis \ref{really llast hyp}, we note that $\tsigma \chi_s \s \tsigma.$ Since $\trho$ corresponds to $\ts$ under the bijection \eqref{internal st}, Working Hypothesis \ref{really llast hyp} yields $\trho \s {^s}\trho,$ which implies that $s \in (\cS_{\vp, \scn})_{\trho}/\cS_{\tvp, \scn}.$ On the other hand,  since $\trho \s {^s}\trho$ for any element $s \in (\cS_{\vp, \scn})_{\trho}/\cS_{\tvp, \scn},$ we have $\tsigma \chi_s \s \tsigma$ and $\chi_s \in I(\ts)$ due to Working Hypotheses \ref{last hyp} and \ref{really llast hyp}. It follows from \eqref{I and X} that $\widehat \lambda(s)=1.$ This completes the proof. 
\end{proof}
\begin{rem} \label{rem for X-orbit}
We note from Theorem \ref{thm 1} that the subset $\Pi_{\ts}(G) \subset \Pi_{\vp}(G)$  is controlled by the quotient $(\cS_{\vp, \scn}/\cS_{\tvp, \scn})^\vee.$
Furthermore, the set $X(\tvp)$ acts on the $L$-packet $\Pi_{\tvp}(\tG)$ by the character twisting via the maps in \eqref{for pre hom}, i.e., for given $a \in X(\tvp)$ and $\ts \in \Pi_{\tvp}(\tG),$ the pair $(a, \ts)$ maps to $\ts \chi_a$ for $\chi_a \in (\tG/G)^\vee.$ 
Therefore, the quotient $X(\tvp) / \bar{I}(\ts)$ in Theorem \ref{thm 2} 
(hence, the set $\Pi_{\rho}(S_{\tvp, \scn})$ which is in bijection with $\cS_{\vp, \scn}/ (\cS_{\vp, \scn})_{\trho}$) is in bijection with the $X(\tvp)$-orbit, $\{\ts \chi_a : a \in X(\tvp) \},$ of $\ts$ in  $\Pi_{\tvp}(\tG).$
\end{rem}
\begin{rem} \label{remark for lambda and lambda hat}
It should be mentioned that, from their definitions, both $\lambda$ and $\widehat \lambda$ rely on the bijection \eqref{internal st}, which is conjectured to characterize the internal structures of $L$-packets $\Pi_{\tvp}(\tG)$ and $\Pi_{\vp}(\G).$ 
Thus, both maps depend on the choices of $\si \in \Pi_{\vp}(\G)$ corresponding to $\rho \in \Irr(\cS_{\vp, \scn}, \chi_{\bG})$ and $\ts \in \Pi_{\tvp}(\tG)$ corresponding to $\trho \in \Irr(\cS_{\tvp, \scn}, \chi_{\tbG}).$
Specially, if  $\bG$ and $\tbG$ are quasi-split and if $L$-parameters $\vp$ and $\tvp$ are tempered, the canonical choices are made by fixing a Whittaker datum so that $\rho$ and $\trho$ are taken to be the trivial character $\mathbbm{1}$ (see also Remark \ref{rem for int structure}).
\end{rem}
\subsection{An equality and multiplicity formula} \label{both multi}
Using two bijections constructed in Theorems \ref{thm 1} and \ref{thm 2}, 
we obtain an equality of multiplicities in two sides (Theorem \ref{mult thm}) and a general formulation of the multiplicity (Theorem \ref{multi generalized}).

Fix a lifting $\ts \in \Irr_{\temp}(\tG)$ of $\si \in \Irr_{\temp}(G)$ and $\trho \in \Irr(\cS_{\tvp, \scn})$ with $\trho \subset \Res_{\cS_{\tvp, \scn}}^{\cS_{\vp, \scn}} (\rho).$ Theorems \ref{thm 1} and \ref{thm 2} yield the following theorem.
\begin{thm} \label{mult thm}
Suppose that Working Hypotheses \ref{wh temp llc}, \ref{shahidi's conj}, and \ref{last hyp} are valid for $G$ and $\tG,$ and further that Working Hypotheses  \ref{llast hyp} and \ref{really llast hyp} hold.
With the above notation, we have
\[
\langle \sigma, \ts \rangle_{G} = \langle \trho, \rho \rangle_{\cS_{\tvp, \scn}}.
\]
\end{thm}
\begin{proof}
We first recall that
\[
|I(\ts)| = |\Pi_{\ts}(G)| \cdot \langle \sigma, \ts \rangle_{G}^2
\]
and $|X(\tvp)|=|\cS_{\vp, \scn}/\cS_{\tvp, \scn}|.$ 
Then, since $\bar I(\ts) \s I(\ts)$ as in \eqref{I and X},  
the bijection in Theorem \ref{thm 1} yields 
\begin{equation} \label{equals for the equal}
\frac{|\bar I(\ts)|}{\langle \sigma, \ts \rangle_{G}^2} = |\Pi_{\ts}(G)| = \frac{|\cS_{\vp, \scn}/\cS_{\tvp, \scn}|}{|I(\rho)|} = \frac{|X(\tvp)|}{|I(\rho)|}.
\end{equation}
Note that Theorem \ref{thm 2} provides
\begin{equation} \label{equals for the equal 2}
\frac{| X(\tvp) |}{| \bar I(\ts) | }=| \Pi_{\rho}(\cS_{\tvp, \scn})|=\frac{| I(\rho)|}{\langle \trho, \rho \rangle_{\cS_{\tvp, \scn}}^2}.
\end{equation}
Due to \eqref{equals for the equal} and \eqref{equals for the equal 2}, we thus have
\[
\frac{|\bar I(\ts)|}{\langle \sigma, \ts \rangle_{G}^2}=\frac{|\bar I(\ts)|}{\langle \trho, \rho \rangle_{\cS_{\tvp, \scn}}^2},
\]
which implies that $\langle \sigma, \ts \rangle_{G}^2 = \langle \trho, \rho \rangle_{\cS_{\tvp, \scn}}^2.$ 
This completes the proof.
\end{proof}
\begin{rem} \label{rem related to xu}
The equality in Theorem \ref{mult thm} has been earlier discussed for unitary principal series of quasi-split unitary groups in \cite[Section 4]{keys87} as well as for quasi-split groups with some other hypotheses in \cite[Section 6]{xu15}. 
\end{rem}

Next, we obtain the following formula for the multiplicity in the restriction in terms of some notions in Langlands dual side, using Clifford's theorem (see \cite[Proposition 20.8]{jamesliebeck01}).
\begin{thm} \label{multi generalized}
Assume as in Theorem \ref{mult thm}.
For any $\trho \in \Irr(\cS_{\tvp, \scn})$ with $\trho \subset \Res_{\cS_{\tvp, \scn}}^{\cS_{\vp, \scn}} (\rho),$
\[
\langle \sigma, \ts \rangle_{G} = \frac{\dim \rho}{\dim \trho} |\Pi_{\rho}(\cS_{\tvp, \scn})|^{-1}.
\]
\end{thm}
\begin{proof}
Due to Theorem \ref{mult thm}, it suffices to show that
\[
\langle \trho, \rho \rangle_{\cS_{\tvp, \scn}} = \frac{\dim \rho}{\dim \trho} |\Pi_{\rho}(\cS_{\tvp, \scn})|^{-1}.
\]
This is a consequence of Clifford's theorem which applies to the situation that $\cS_{\tvp, \scn}$ is a normal subgroup of a finite group $\cS_{\vp, \scn}$ whose factor group is abelian.
\end{proof}
\begin{rem} \label{rem for gen from hs}
We note from Clifford's theorem that $\dim \trho_1 = \dim \trho_2$ for any $\trho_1, \trho_2 \subset \Res_{\cS_{\tvp, \scn}}^{\cS_{\tvp, \scn}} (\rho).$ 
Recall that $\rho \in \Irr(\cS_{\vp, \scn}, \zeta_{\bG} )$ and $\trho \in  \Irr(\cS_{\tvp, \scn}, \zeta_{\tbG} )$ correspond to $\si$ and $\ts,$ respectively, due to the internal structure \eqref{internal st} for $G$ and $\tG.$ 
Since $\trho \subset \Res_{\cS_{\tvp, \scn}}^{\cS_{\tvp, \scn}} (\rho),$ it is clear that
\[
\langle \sigma, \ts \rangle_{G} = \frac{\dim \rho}{\dim \trho} |\Pi_{\rho}(\cS_{\tvp, \scn})|^{-1}.
\] 
\end{rem}

\begin{rem}
When $\cS_{\vp, \scn}$ is abelian, Theorem \ref{multi generalized} yields that $\langle \sigma, \ts \rangle_{G} $ is always equal to 1. 
Further, if $\tbG$ is quasi-split (so is $\bG$) and if $\cS_\vp$ is abelian, then it also follows that $\langle \sigma, \ts \rangle_{G} $ is always equal to 1.
\end{rem}
\subsection{Hiraga and Saito's work for $\SL_m(D)$ revisited} \label{section for SL}
In this section, 
we shall observe that Theorem \ref{multi generalized} generalizes the Hiraga and Saito's work in \cite{hs11} on the multiplicity in restriction for the case of inner forms of $\GL_n$ and $\SL_n$ toward the case of arbitrary connected reductive algebraic groups $\tbG$ and $\bG$ with the condition \eqref{cond on G}. 

Denote by $D$ a central division algebra of dimension $d^2$ over $F$ (possibly $D=F,$ in which case $d=1$). 
Let $\GL_{m}(D)$ denote the group of all invertible elements of $m \times m$ matrices over $D,$ and $\SL_m(D)$ the subgroup of elements in $\GL_m(D)$ with reduced norm equal to $1.$ 
By abuse of notation, we write $\GL_m(D)$ and $\SL_m(D)$ for their algebraic groups over $F$ as well.
Set $n=md.$ Then, any $F$-inner forms of $\GL_n$ and $\SL_n$ 
are of the form ${\GL}_m(D)$ and ${\SL}_m(D),$ respectively (see \cite[Sections 2.2 \& 2.3]{pr94}).
We have $\widehat{\GL_m(D)} = {\GL}_n(\CC)$ and $\widehat{\SL_m(D)} = {\PGL}_n(\CC),$ since $\Gamma$ acts trivially.
Due to \cite{abps13, gk82, ht01, he00, hs11, scholze13}, the local Langlands conjecture in \eqref{llc bij} and the conjectural internal structure of $L$-packets in \eqref{internal st} for all inner forms of $\SL_n$ and $\GL_n$ are known.
In particular, from \cite[Chapter 12]{hs11} and \cite[Section 3]{abps13},
given $\vp \in \Phi_{\temp}(\SL_m(D))$ and its lifting $\tvp \in \Phi_{\temp}(\GL_m(D))$ such that $\Pi_{\tvp}(\GL_m(D)) = \{ \ts  \},$  we have a bijection
\begin{equation} \label{bij sl}
\Pi_{\vp}\big({\SL}_m(D)\big) \overset{1-1}{\longleftrightarrow} \Irr\big(\cS_{\vp, \scn}(\widehat{{\SL}_m(D)}), ~ \zeta_{{\SL}_m(D)} \big),
\end{equation}
such that the isomorphism
\begin{equation*} 
V_{\ts} ~ ~ \s 
\bigoplus_{\rho \in \Irr\big(\cS_{\vp, \scn}(\widehat{\SL_m(D)}), ~\zeta_{\SL_m(D)} \big)} \rho \boxtimes  \si_{\rho}
\end{equation*}
as representations of $\cS_{\vp, \scn}(\widehat{\SL_m(D)}) \times \SL_m(D)$ holds,
where $\si_{\rho}$ denotes the image of $\rho$ via the bijection \eqref{bij sl} (see \cite[Lemma 12.6]{hs11}).
It then follows from \eqref{dim=multi} and \cite[p.5]{hs11} that 
\begin{equation} \label{dim=dim=multi}
\langle \sigma, \ts \rangle_{\SL_m(D)} = \dim \xi_{\si} = \dim \rho_{\si}.
\end{equation}
Further, for any $\si_1, \si_2 \in \Pi_{\vp}(\SL_m(D)),$ we have
$
{\dim}\rho_{\si_1} = {\dim}\rho_{\si_2}
$ 
(see also Remark \ref{dim1=dim2}).

Therefore, Theorem \ref{multi generalized} generalizes the Hiraga and Saito's work in \eqref{dim=dim=multi} to the case of arbitrary connected reductive algebraic groups $\tbG$ and $\bG$ with the condition \eqref{cond on G}. 
To be precise,
when $\tbG = \GL_m(D)$ and $\bG=\SL_m(D),$ we always have
\[
\dim \trho = 1 ~~\text{ and } ~~  |\Pi_{\rho}(\cS_{\tvp, \scn})|=1.
\]
It follows from Theorem \ref{multi generalized} and Remark \ref{rem for gen from hs} that
\[
\langle \sigma, \ts \rangle_{\SL_m(D)} =\dim \rho,
\]
which coincides with  \eqref{dim=dim=multi}.
\subsection*{Acknowledgements}
The author would like to express his great appreciation to Tasho Kaletha for his insightful comments and fruitful discussions on this work.
He is also grateful to Jeff Adler, Wee Teck Gan, Wen-Wei Li, Dipendra Prasad, and Mark Reeder for their valuable suggestions and helpful communications.
The author wishes to thank the referee for a careful reading and many valuable comments and suggestions that have led to improvements in the manuscript.
This work was partially done during his visit at Max-Planck-Institut f\"{u}r Mathematik, Bonn in June and July 2016. The author thanks the institute for their generous support and stimulating research environment.
\appendix
\section{Examples} \label{examples}
We present some examples related to the results established in Section \ref{section of main results}.
We continue with the notation in the previous sections.
\begin{exa} \label{eg-sp11}
This example is based on \cite[Sections 7.6 and 7.7]{ch15}.
Let $\tbG=\GSp_{1,1}$ be the non-quasi-split inner form of $\GSp_4,$ and let $\bG=\Sp_{1,1}$ be the non-quasi-split inner form of $\Sp_4.$ 
Let $\tvp = \tvp_0 \oplus (\tvp_0 \otimes \chi) \in \Phi(\GSp_{1,1})$ be given, 
where $\chi$ is a quadratic character, $\tvp_0 \in \Phi(\GL_2)$ is primitive (by definition, 
$\tvp_0$ is not of the form $\Ind_{W_E}^{W_F} \tau$ for any non-trivial finite extension $E/F$ and irreducible representation $\tau$), 
and $\tvp_0 \not\s \tvp_0 \otimes \chi.$ 

We have
\begin{equation*} \label{decomp Sp11}
{\Res}^{\GSp_{1,1}}_{\Sp_{1,1}}(\ts'_1) =  {\Res}^{\GSp_{1,1}}_{\Sp_{1,1}}(\ts'_2)= \{ \si' \},
\end{equation*}
and $\ts'_2 \s \ts'_1 \chi.$
Moreover, from the fact that $X(\tvp) \s \{ \mathbbm{1}, \chi\}$ (see \cite[Proposition 6.3(iii)(b)]{gtsp10}), it follows that 
\begin{equation} \label{sepcial multi for Sp11}
I(\ts'_1) = I(\ts'_2) = \{ \mathbbm{1} \}.
\end{equation}
Thus, the $L$-packet $\Pi_{\vp}({\Sp}_{1,1})$ of $\Sp_{1,1}$ attached to the $L$-parameter $\vp$ is $\{ \si' \}.$
We recall from \cite[Sections 7.6 and 7.7]{ch15} that 
\[
1 \longrightarrow \mu_2(\CC) \longrightarrow S_{\tvp, \scn}(\widehat {{\GSp}_{1,1}}) \s (\ZZ/2\ZZ)^2 \longrightarrow S_{\tvp}(\widehat {{\GSp}_{1,1}}) \s \ZZ/2\ZZ \longrightarrow 1,
\]
\[
1 \longrightarrow \mu_2(\CC) \longrightarrow S_{\vp, \scn}(\widehat {{\Sp}_{1,1}}) \s \mathcal{D}_8  \longrightarrow S_{\vp}(\widehat {{\Sp}_{1,1}}) \s (\ZZ/2\ZZ)^2  \longrightarrow 1,
\]
where $\mathcal{D}_8$ denotes the dihedral group of order 8.
Further, we note that $\Irr(\mathcal{D}_8)$ consists of four 1-dimensional characters and one 2-dimensional irreducible representation. 
We denote by $\rho'$ the  2-dimensional irreducible representation.
Setting $\Irr(\mu_2(\CC))=\{ \mathbbm{1}, \sgn\},$ the map $\sigma' \mapsto \rho'$ from $\Pi_{\vp}({\Sp}_{1,1})$ to  $\Irr(S_{\vp, \scn}(\widehat{{\Sp}_{1,1}}), \sgn)$ provides an equality 
\begin{equation} \label{mutli 2}
\dim \rho' = 2,
\end{equation}
while the multiplicity $\langle \sigma', \ts'_i \rangle_{\Sp_{1,1}}$ in ${\Res}_{\Sp_{1,1}}^{\GSp_{1,1}}(\ts'_i)$ for $i=1,2$  satisfies
\begin{equation} \label{multi for Sp11 special}
\langle \sigma', \ts'_i \rangle_{\Sp_{1,1}} = 1.
\end{equation}

We now consider $(\GSp_{1,1}(F))_{\sigma'}$ which turns out to be equal to $\GSp_{1,1}(F)$ and 
\[
\tG / \tG_{\sigma'} = \{1\}.
\] 
We also note that
\[
I(\rho') = \{ \eta \in \big(\mathcal{D}_8\big/(\ZZ/2\ZZ)^2\big)^\vee : \rho' \eta \s \rho'\} = \big(\mathcal{D}_8\big/(\ZZ/2\ZZ)^2\big)^\vee,
\]
which implies that 
\[
(S_{\vp, \scn}/S_{\tvp, \scn})^\vee \big/ I(\rho').
\]
Hence, this coincides with Theorem \ref{thm 1}.

For Theorem \ref{thm 2}, we first have
\[
X(\tvp) = \{1, \chi    \} ~~ \text{ and } ~~
I(\ts') = {1}.
\]
We note that, for any Klein-four subgroup $H$ of $\mathcal{D}_8$ (in fact, there are two), we have
\[
{\Res}^{\mathcal{D}_8}_{H}(\rho') = \{\chi_1, \chi_2\},
\]
where $\chi_1$ and $\chi_2$ are distinct 1-dimensional characters of $H,$ since the trace of $\rho'$ vanishes outside such subgroup $H$ (see \cite[(20.13)(2)]{jamesliebeck01}).
Fix $\chi_1$ corresponding to $\ts'_1$ via the bijection between $\Pi_{\tvp}({\GSp}_{1,1})$ and  $\Irr(\cS_{\tvp, \scn}(\widehat{{\GSp}_{1,1}}), \sgn).$
Computing the stabilizer $(\mathcal{D}_8)_{\chi_1}=\{s \in \mathcal{D}_8 : {^s}\chi_1 = \chi_1   \} ,$ we have $\mathcal{D}_8/(\mathcal{D}_8)_{\chi_1} \s \ZZ/2\ZZ.$ 
Hence, this coincides with Theorem \ref{thm 2}.

Lastly, for the multiplicity in the restriction, given a lifting $\ts' \in \Pi_{\tvp}(\GSp_{1,1})$ of $\sigma' \in \Pi_{\vp}(\Sp_{1,1}),$  we have
\[
\langle \sigma', \ts' \rangle_{G}  = \frac{\dim \rho'}{\dim \chi_1} |\Pi_{\rho'}(S_{\tvp, \scn})|^{-1} = \frac{2}{1} 2^{-1} = 1,
\]
which coincides with \eqref{multi for Sp11 special}.
Hence, this coincides with Proposition \ref{multi generalized}. This applies to all the others in $\Pi_{\vp}(\Sp_{1,1}).$
\end{exa}

\begin{exa} \label{eg-sp4}
Let $\tbG=\GSp_4,$ 
$\bG=\Sp_{4},$ and $\tvp$ be given as above in Example \ref{eg-sp11}.  From \cite[Sections 7.6 and 7.7]{ch15}, we have
\[
S_{\tvp}(\widehat{{\GSp}_{4}}) \s \ZZ/2\ZZ, \quad S_{\vp}(\widehat{{\Sp}_{4}}) \s \ZZ/2\ZZ \times \ZZ/2\ZZ,
\]
\[
{\Res}^{{\GSp}_4}_{{\Sp}_4}(\ts_1) = \{ \si_1^+, \si_1^- \}, \quad {\Res}^{{\GSp}_4}_{{\Sp}_4}(\ts_2) = \{ \si_2^+, \si_2^- \},
\]
\[
\Pi_{\tvp}({\GSp}_4) = \{ \ts_1, \ts_2   \}, \quad \Pi_{\vp}({\Sp}_4) = \{\si_1^+, \si_1^-, \si_2^+, \si_2^- \}.
\]
From the bijection $\Pi_{\vp} \overset{1-1}{\longleftrightarrow} \Irr(S_{\vp}(\widehat{{\Sp}_4})),$ we correspond $\si=\si_1^+$ to $\rho=\mathbbm{1}.$ 
Recall that there is a bijection ${\GSp}_4(F)/({\GSp}_4(F))_{\si} \overset{1-1}{\longleftrightarrow} \Pi_{\ts_1}(\Sp_4).$
Then we note that 
\[
\{ \eta \in  \big(S_{\vp}(\widehat{{\Sp}_{4}})\big/S_{\tvp}(\widehat{{\GSp}_{4}})\big)^\vee : \mathbbm{1} \eta = \mathbbm{1}\} = \{ \mathbbm{1} \} 
\]
and we have
\[
{\GSp}_4(F)\big/({\GSp}_4(F))_{\si} \s \ZZ/2\ZZ, 
\]
\[
 \big(S_{\vp}\s \ZZ/2\ZZ,(\widehat{{\Sp}_{4}})/S_{\tvp}(\widehat{{\Sp}_{4}})\big)^\vee \big/ \{ \eta \in  \big(S_{\vp}(\widehat{{\Sp}_{4}})/S_{\tvp}(\widehat{{\Sp}_{4}}) \big)^\vee : \mathbbm{1} \eta = \mathbbm{1}\} \s \ZZ/2\ZZ.
 \]

Hence,  this coincides with  Theorem \ref{thm 1}.
Moreover, from the bijection $\Pi_{\tvp} \overset{1-1}{\longleftrightarrow} \Irr(S_{\tvp}(\widehat{{\GSp}_4})),$ we correspond $\ts=\ts_1$ to $\trho=\mathbbm{1}.$ Then we have
\[
X(\tvp)/\bar I(\ts) = \{ \mathbbm{1} \},~~  S_{\tvp}(\widehat{{\GSp}_4})\big/S_{\tvp}(\widehat{{\GSp}_4})_{\mathbbm{1}} =\{ 1 \}.
\]
Hence,  this coincides with Theorem \ref{thm 2}.

Lastly, for the multiplicity in the restriction, given a lifting $\ts$ of $\sigma,$   we have
\[
\langle \sigma, \ts \rangle_{G}  = \frac{\dim \mathbbm{1}}{\dim \mathbbm{1}} |\Pi_{\mathbbm{1}}(S_{\tvp, \scn})|^{-1} = \frac{1}{1} 1^{-1} = 1.
\] 
Hence,  this coincides with Proposition \ref{multi generalized}. This applies to all the others in $\Pi_{\vp}(\Sp_4).$
\end{exa}

\begin{exa} \label{eg-sl2}
Let $\tbG=\GL_2,$ $\bG=\SL_2,$ and $\tvp \in \Phi(\tG)$  be dihedral with respect to three quadratic extensions. Then we have 
\[
S_{\tvp}(\widehat{{\GL}_2}) = \{ 1 \}, \quad S_{\vp}(\widehat{{\SL}_2}) \s \ZZ/2\ZZ \times \ZZ/2\ZZ,
\]
\[
\Pi_{\tvp}({\GL}_2) = \{ \ts\}, \quad {\Res}^{{\GL}_2}_{{\SL}_2}(\ts) = \Pi_{\vp}({\SL}_4) = \{ \si_1, \si_2, \si_3, \si_4 \}.
\]
From the bijection $\Pi_{\vp} \overset{1-1}{\longleftrightarrow} \Irr(S_{\vp}(\widehat{{\SL}_2})),$ we correspond $\si=\si_1$ to $\rho=\mathbbm{1}.$ 
Note that there is a bijection ${\GL}_2(F)/({\GL}_2(F))_{\si} \overset{1-1}{\longleftrightarrow} \Pi_{\ts}(\SL_2).$
We then have
\[
{\GL}_2(F)\big/({\GL}_2(F))_{\si} \s \ZZ/2\ZZ \times \ZZ/2\ZZ,
\]
\[
\big(S_{\vp}(\widehat{{\SL}_2})/S_{\tvp}(\widehat{{\SL}_2})\big)^\vee \big/ \{ \eta \in  \big(S_{\vp}(\widehat{{\SL}_2})/S_{\tvp}(\widehat{{\SL}_2})\big)^\vee : \mathbbm{1} \eta = \mathbbm{1}\} \s \ZZ/2\ZZ \times \ZZ/2\ZZ.
\]
Hence,  this coincides with  Theorem \ref{thm 1}.
Moreover, from the bijection $\Pi_{\tvp} \overset{1-1}{\longleftrightarrow} \Irr(S_{\tvp}(\widehat{{\GL}_2})),$ we correspond $\ts=\ts_1$ to $\trho=\mathbbm{1}.$ Then we have
\[
X(\tvp)/\bar I(\ts) = \{ \mathbbm{1} \},~~  S_{\tvp}(\widehat{{\GL}_2})\big/S_{\tvp}(\widehat{{\GL}_2})_{\mathbbm{1}} =\{ 1 \}.
\]
Hence,  this coincides with Theorem \ref{thm 2}.

Lastly, for the multiplicity in the restriction, given a lifting $\ts$ of $\sigma,$   we have
\[
\langle \sigma, \ts \rangle_{G} = \frac{\dim \mathbbm{1}}{\dim \mathbbm{1}} |\Pi_{\mathbbm{1}}(S_{\tvp, \scn})|^{-1} = \frac{1}{1} 1^{-1} = 1.
\] 
Hence,  this coincides with Proposition \ref{multi generalized}. This applies to all the others in $\Pi_{\vp}(\SL_2).$
\end{exa}
\begin{rem}
From Example \ref{eg-sl2}, we note that two sizes $|I(\rho)|=1$ and $|I(\ts)|=4$ does not necessarily equal each other. 
This implies that $|\Pi_{\ts}(G)|=4$ does not need to be identical with $|\Pi_{\rho}(\cS_{\tvp, \scn})|=1.$
\end{rem}

\begin{exa} \label{eg-d1}
Let $\tbG=\GL_1(D),$ $\bG=\SL_1(D),$ where $D$ is the quaternion division algebra over $F,$ and $\tvp \in \Phi(\tbG)$ be as in Example \ref{eg-sl2}. Then we have 
\[
1 \longrightarrow \mu_2(\CC) \longrightarrow S_{\tvp, \scn}(\widehat {{\GL}_1(D)}) \s \ZZ/2\ZZ   \longrightarrow S_{\tvp}(\widehat {{\GL}_1(D)}) ={1} \longrightarrow 1,
\]
\[
1 \longrightarrow \mu_2(\CC) \longrightarrow S_{\vp, \scn}(\widehat {{\SL}_1(D)}) \s Q_8  \longrightarrow S_{\vp}(\widehat {{\SL}_1(D)}) \s \ZZ/2\ZZ \times \ZZ/2\ZZ \longrightarrow 1,
\]
where $Q_8$ denotes the quaternion group of order 8.
Recall that
\[
\Pi_{\tvp}({\GL}_1(D)) = \{ \ts'\}, ~~{\Res}^{{\GL}_1(D)}_{{\SL}_1(D)}(\ts') = \Pi_{\vp}({\SL}_1(D)) = \{ \si' \},
\]
\[
\Irr(Q_8) = \{ \chi_1, \chi_2, \chi_3, \chi_4, \rho'  \},
\]
where $\chi_i$'s are distinct 1-dimensional representations, and $\rho'$ is the 2-dimensional representation of $Q_8.$ 
From the bijection $\Pi_{\vp}(\SL_1(D)) \overset{1-1}{\longleftrightarrow} \Irr(S_{\vp, \scn}(\widehat{{\SL}_1(D)}), \sgn) = \{ \rho' \},$ we correspond $\si'$ to $\rho'.$ 
Note that there is a bijection ${\GL}_1(D)/({\GL}_1(D))_{\si'} \overset{1-1}{\longleftrightarrow} \Pi_{\ts'}(\SL_1(D)).$
We then have
\[
{\GL}_1(D)\big/({\GL}_1(D))_{\si'} = \{1\},~~ \big(Q_8/(\ZZ/2\ZZ)\big)^\vee \big/ \{ \eta \in  \big(Q_8/(\ZZ/2\ZZ)\big)^\vee : \rho'\eta = \rho'\} \s \{ \mathbbm{1} \},
\]
since  $\{ \eta \in  (Q_8/(\ZZ/2\ZZ))^\vee : \rho'\eta = \rho'\} = \{ \chi_1, \chi_2, \chi_3, \chi_4 \} .$
Hence,  this coincides with  Theorem \ref{thm 1}.
Moreover, from the bijection $\Pi_{\tvp} \overset{1-1}{\longleftrightarrow} \Irr(S_{\tvp, \scn}(\widehat{{\GL}_1(D)}), \sgn),$ we correspond $\ts'$ to $\sgn.$ Then we have
\[
X(\tvp)/I(\ts') = \{ \mathbbm{1} \},~~ Q_8/(Q_8)_{\rho'} =\{ 1 \}.
\]
Hence,  this coincides with  Theorem \ref{thm 2}.

Lastly, for the multiplicity in the restriction, given a lifting $\ts'$ of $\sigma',$   we have
\[
\langle \sigma', \ts' \rangle_{G} = \frac{\dim \rho'}{\dim \mathbbm{1}} |\Pi_{\mathbbm{1}}(S_{\tvp, \scn})|^{-1} = \frac{2}{1} 1^{-1} = 2.
\] 
Hence,  this coincides with  Proposition \ref{multi generalized}. This applies to all the others in $\Pi_{\vp}(\SL_1(D)).$
\end{exa}

\end{document}